\date{}
\newlength{\defbaselineskip}
\newcommand{\setlinespacing}[1]%
           {\setlength{\baselineskip}{#1 \defbaselineskip}}
\newcommand{\N}{{\mathbb{N}}}
\newcommand{\actaqed}{\hfill $\actabox$}
{\medskip\noindent \textit{Proof of #1. }}%
{\actaqed \medskip}
\def\C{{\mathcal C}}
\def\cC{{\mathcal C}}
\def \Tr{\mathcal T}
\def \cN{\mathcal N}
\def \cM{\mathcal M}
\def\R{{\mathbb R}}
\def \T{\mathbb T}
\def\bbC{\mathbb C}
\def \<{\langle}
\def\>{\rangle}
\def\Og{\Omega}
\def \ep{\epsilon}
\def \e{\varepsilon}
\def\al{\alpha}
\def \ro{\varrho}
\def \sp{\operatorname{span}}
\def\bx{\mathbf x}
\def\by{\mathbf y}
\def\bw{\mathbf w}
\def\bN{\mathbf N}
\def\bX{\mathbf X}
\def\bF{\mathbf F}
\def\Ld{\Lambda}
\def\Og{\Omega}
\def\og{\omega}
\newtheorem{Theorem}{Theorem}[section]
\newtheorem{Lemma}{Lemma}[section]
\newtheorem{Proposition}{Proposition}[section]
\newtheorem{Remark}{Remark}[section]
\newtheorem{Corollary}{Corollary}[section]
\numberwithin{equation}{section}
\newcommand{\be}{\begin{equation}}
\newcommand{\ee}{\end{equation}}
\begin{document}

\title{Sampling discretization of integral norms and its application}

\author{ F. Dai and   V. Temlyakov 	\footnote{
		The first named author's research was partially supported by NSERC of Canada Discovery Grant
		RGPIN-2020-03909.
		The second named author's research was supported by the Russian Federation Government Grant No. 14.W03.31.0031.
  }}

\newcommand{\Addresses}{{
  \bigskip
  \footnotesize

  F.~Dai, \textsc{ Department of Mathematical and Statistical Sciences\\
University of Alberta\\ Edmonton, Alberta T6G 2G1, Canada\\
E-mail:} \texttt{fdai@ualberta.ca }

 \medskip
  V.N. Temlyakov, \textsc{University of South Carolina,\\ Steklov Institute of Mathematics,\\  Lomonosov Moscow State University,\\ and Moscow Center for Fundamental and Applied Mathematics.
  \\
E-mail:} \texttt{temlyak@math.sc.edu}

}}
\maketitle

\begin{abstract}
	{The paper addresses a problem of sampling discretization of integral norms of elements of finite-dimensional subspaces satisfying some conditions. We prove sampling discretization results under two standard kinds of assumptions -- conditions on the entropy numbers and  conditions in terms of the Nikol'skii-type inequalities. We prove some upper bounds on the number of sample points sufficient for good discretization and show that these upper bounds are sharp in a certain sense. Then we apply our general conditional results to subspaces with special structure, namely,
subspaces with the tensor product structure. We demonstrate that applications of results based on 
the Nikol'skii-type inequalities provide somewhat better results than applications of results based on 
 the entropy numbers conditions. Finally, we apply discretization results to the problem of sampling recovery. }
\end{abstract}

{\it Keywords and phrases}: Sampling discretization, entropy numbers, Nikol'skii inequality, recovery.

{\it MSC classification 2000:} Primary 65J05; Secondary 42A05, 65D30, 41A63.

\section{Introduction} 

Let $\Omega$ be a subset of $\R^d$ with the probability measure $\mu$. By $L_p$, $1\le p< \infty$, norm we understand
$$
\|f\|_p:=\|f\|_{L_p(\Omega,\mu)} := \left(\int_\Omega |f|^pd\mu\right)^{1/p}.
$$
By $L_\infty$ norm we understand the uniform norm of continuous functions
$$
\|f\|_\infty := \max_{\bx\in\Omega} |f(\bx)|
$$
and with some abuse of notation we occasionally write $L_\infty(\Omega)$ for the space $\cC(\Omega)$ of continuous functions on $\Omega$.

By discretization of the $L_p$ norm we understand a replacement of the measure $\mu$ by
a discrete measure $\mu_m$ with support on a set $\xi =\{\xi^j\}_{j=1}^m \subset \Omega$   in such a way that the error $|\|f\|^p_{L_p(\Omega,\mu)} - \|f\|^p_{L_p(\Omega,\mu_m)}|$ is small for functions from a given class. In this paper we focus on discretization of the $L_p$ norms of elements of finite-dimensional subspaces. Namely, we work on the following problem. 

{\bf The Marcinkiewicz discretization problem.} Let $\Omega$ be a compact subset of $\R^d$ with the probability measure $\mu$. We say that a linear subspace $X_N$ (index $N$ here, usually, stands for the dimension of $X_N$) of $L_p(\Omega,\mu)$, $1\le p < \infty$, admits the Marcinkiewicz-type discretization theorem with parameters $m\in \N$ and $p$ and positive constants $C_1\le C_2$ if there exists a set  
$$
\Big\{\xi^j \in \Omega: j=1,\dots,m\Big\}
$$
 such that for any $f\in X_N$ we have
\be\label{I1}
C_1\|f\|_p^p \le \frac{1}{m} \sum_{j=1}^m |f(\xi^j)|^p \le C_2\|f\|_p^p.
\ee

{\bf The Bernstein discretization problem.} In the case $p=\infty$ we define $L_\infty$ as the space of continuous functions on $\Omega$  and ask for
\begin{equation}\label{I2}
C_1\|f\|_\infty \le \max_{1\le j\le m} |f(\xi^j)| \le  \|f\|_\infty.
\end{equation}

We will also use the following brief way to express the above properties: The $\cM(m,p)$ (more precisely the $\cM(m,p,C_1,C_2)$) theorem holds for  a subspace $X_N$, written $X_N \in \cM(m,p)$ (more precisely $X_N \in \cM(m,p,C_1,C_2)$). In the case $q=\infty$ we always have $C_2=1$ and for brevity we write $\cM(m,\infty,C_1)$ instead of $\cM(m,\infty,C_1,1)$.

There are known results on the Marcinkiewicz discretization problem proved for subspaces $X_N$
satisfying some conditions. There are two types of conditions used in the literature: (I) Conditions on the entropy numbers and (II) Conditions in terms of the Nikol'skii-type inequalities. We now describe these conditions in detail. 

{\bf I. Entropy conditions.} We begin with the definition of the entropy numbers.
  Let $X$ be a Banach space and let $B_X$ denote the unit ball of $X$ with the center at $0$. Denote by $B_X(y,r)$ the ball with center $y$ and radius $r>0$,  $\{x\in X:\|x-y\|\le r\}$. For a compact set $A\subset X$  we define the entropy numbers $\e_k(A,X)$, $k=0,1,\cdots$: 
$$
\e_k(A,X)  :=\inf \Bigl\{\e>0 : \exists y^1,\dots ,y^{2^k} \in A ,\    A \subseteq \cup_{j=1}
^{2^k} B_X(y^j,\e)\Bigr\}.
$$
In our definition of   $\e_k(A,X)$ we require $y^j\in A$. In a standard definition of $\e_k(A,X)$ this restriction is not imposed. 
However, it is well known (see \cite{VTbook}, p.208) that these characteristics may differ at most by the factor $2$. Throughout the paper we use the following notation for the unit $L_q$ ball of $X_N$
$$
X_N^q:= \{f\in X_N:\, \|f\|_q \le 1\}.
$$
Here is a standard  entropy assumption in discretization theory: Suppose that a subspace $X_N$ satisfies the condition $(B\ge 1)$
\be\label{I3}
\e_k(X^q_N,L_\infty) \le  B  (N/k)^{1/q},\quad 1\le k\le N.
 \ee

{\bf II. Nikol'skii-type inequalities.} Let $q\in [1,\infty)$ and $X_N\subset L_\infty(\Omega)$. The inequality
\begin{equation}\label{I4}
\|f\|_\infty \leq M\|f\|_q,\   \ \forall f\in X_N
\end{equation}
is called the Nikol'skii inequality for the pair $(q,\infty)$ with the constant $M$. 
It is convenient to write the constant $M$ in the form $M=BN^{1/q}$. If $X_N$ satisfies 
(\ref{I4}) with $M=BN^{1/q}$, then we say that $X_N$ satisfies Condition NqB (see Section \ref{sr}
below) and write $X_N \in \cN(q,\infty,B)$.

 It is well known that in the case $q\in [2,\infty)$ the above entropy condition and Nikol'skii-type inequality are closely related. We now comment on the relation between the Nikol'skii inequality and
the entropy condition (\ref{I3}). On one hand it is easy to see (see \cite{DPSTT2}) that the entropy condition (\ref{I3}) for $k=1$ implies the following Nikol'skii inequality
$$
\|f\|_\infty \le 4BN^{1/q}\|f\|_q \quad \text{for all}\quad f\in X_N.
$$
On the other hand we note that the Nikol'skii-type
inequality condition \eqref{I4} with $M=B N^{1/q}$ implies
 the entropy condition \eqref{I3} with $k=1$. Thus, the Nikol'skii-type
inequality condition is equivalent to the entropy condition (\ref{I3}) for $k=1$.
Moreover, Lemma \ref{AL2} (see below)   from
\cite{VT180} shows that in the case $q\in [2,\infty)$ the Nikol'skii inequality (\ref{I4}) combined with an extra mild condition implies  the entropy condition (\ref{I3}) with $B$ replaced by $C(\log N)^{1/q}B$.

We refer the reader to the recent survey papers \cite{DPTT} and \cite{KKLT} for a detailed 
description of known results on the Marcinkiewicz discretization problem under conditions I and II. 
In this paper we only cite those results which are directly related to our new results. We now give 
brief comments on results obtained in Sections \ref{A}--\ref{sr}. 

In Section \ref{A} we discuss the following setting. Assume that parameters $1\le p,q <\infty$ are given. We would like to solve the Marcinkiewicz discretization problem for the $L_p$ norm under assumption that either Conditions I or II is satisfied with the parameter $q$. Typically, the known results 
address either the case $p=q$ or the case when the Nikol'skii-type inequality for the pair $(2,\infty)$ is imposed. In Section \ref{A} (see Corollary \ref{AC1}) we show how a simple argument allows us to derive a discretization result in the case $1\le q\le p<\infty$ under assumptions I from the corresponding result for $p=q$. The main result of Section \ref{A} is Theorem \ref{AT2}, which 
improves known results in the case when  $p=q$ and  $3<q<\infty$ under Conditions II. Then we use 
Theorem \ref{AT2} to  deduce  discretization results in the case $2\le q\le p <\infty$ (see Corollary \ref{AC2}). 

In Section \ref{B} we apply results of Section \ref{A} to subspaces with special structure, namely,
subspaces with the tensor product structure. We demonstrate that applications of results based on 
Conditions II provide somewhat better results than applications of results based on 
Conditions I. This observation is based on a known result on the Nikol'skii-type inequalities for subspaces with the tensor product structure (see Lemma \ref{BL1} below).

In Section \ref{sr} we apply discretization results of Section \ref{A} to the problem of sampling recovery. Recently, it was observed in \cite{VT183} how discretization results can help to prove 
general inequalities between optimal sampling recovery and the Kolmogorov widths. Namely,
it was proved in \cite{VT183} that the optimal error of recovery in the $L_2$ norm of functions from a class $\bF$ can be bounded above by the value of the Kolmogorov width of $\bF$ in the uniform norm. In Section \ref{sr} we demonstrate how to derive some general inequalities for the optimal sampling recovery in $L_p$ from the corresponding discretization results of Section \ref{A}.

\section{A generalization} 
\label{A}

We start with the following 
 Theorem \ref{AT1}, which was proved in \cite{VT159} in the case of  $q=1$  with the help of the chaining technique, and in \cite{DPSTT1} for the general case. 

\begin{Theorem}\label{AT1} Let $1\le q<\infty$. Suppose that a subspace $X_N$ satisfies the condition
\be\label{A1}
\e_k(X^q_N,L_\infty) \le  B (N/k)^{1/q}, \quad 1\leq k\le N,
\ee
where $B\ge 1$.
Then for a large enough constant $C(q)$ there exists a set of
$$
m \le C(q)B^{q}N(\log_2(2BN))^2
$$
 points $\xi^j\in \Omega$, $j=1,\dots,m$,   such that for any $f\in X_N$
we have
$$
\frac{1}{2}\|f\|_q^q \le \frac{1}{m}\sum_{j=1}^m |f(\xi^j)|^q \le \frac{3}{2}\|f\|_q^q.
$$
\end{Theorem}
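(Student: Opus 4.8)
The plan is to prove this via the standard empirical process / random sampling approach, the same kind of argument that underlies Rudelson's selection theorem and the chaining technique used in \cite{VT159}. The idea is to pick the points $\xi^1,\dots,\xi^m$ independently at random according to the measure $\mu$, and then show that with positive probability the empirical $q$-th moments are uniformly close to the true $q$-th moments over the unit ball $X_N^q$. Concretely, for $f\in X_N^q$ define the centered random variable $Z_j(f) = |f(\xi^j)|^q - \|f\|_q^q$, and we want to bound
\[
\sup_{f\in X_N^q}\Big| \frac1m \sum_{j=1}^m Z_j(f)\Big| \le \frac12
\]
with probability bounded away from zero. By symmetrization this is reduced (up to constants) to controlling $\mathbb{E}\sup_{f\in X_N^q}|\frac1m\sum_j \varepsilon_j |f(\xi^j)|^q|$ where $\varepsilon_j$ are Rademacher signs, and then one conditions on the $\xi^j$ and estimates the resulting Rademacher process by chaining (Dudley's entropy integral) with respect to the pseudmetric induced on $X_N^q$ by the sample points.

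The key steps, in order, would be: \textbf{(1)} Set up the random model: sample $\xi^j$ i.i.d.\ from $\mu$; reduce via symmetrization to a Rademacher average. \textbf{(2)} Condition on a ``good event'' on which the empirical $L_\infty$ structure is controlled — one needs that $\max_j |f(\xi^j)|$ is not too large relative to $\|f\|_q$ on the sample, which is where the entropy hypothesis \eqref{A1} enters: the bound $\e_k(X_N^q, L_\infty)\le B(N/k)^{1/q}$ in particular (taking $k=1$, as noted in the excerpt) gives the Nikol'skii inequality $\|f\|_\infty \le 4BN^{1/q}\|f\|_q$, so the summands $|f(\xi^j)|^q$ are bounded by $(4B)^q N$ on $X_N^q$. \textbf{(3)} Run Dudley's chaining for the Rademacher process in the metric $d_\xi(f,g) = (\frac1m\sum_j ||f(\xi^j)|^q - |g(\xi^j)|^q|^2)^{1/2}$ (or, after a Lipschitz contraction, the $\ell_2^m$ metric on $(f(\xi^j))_j$); the covering numbers of $X_N^q$ in $L_\infty$ supplied by \eqref{A1}, together with the a priori $L_\infty$-boundedness from step (2) and a volumetric bound at small scales in the $N$-dimensional space, feed the entropy integral. \textbf{(4)} Collect the bound; the entropy integral produces a factor of roughly $B\sqrt{N}\log(BN)$ over $\sqrt m$, so choosing $m \asymp B^q N (\log(2BN))^2$ makes the supremum at most $1/2$, and a union/expectation argument (Chebyshev or a Borel–Cantelli-style counting) yields existence of one good point configuration. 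The asymmetry of the constants $\frac12$ and $\frac32$ is automatic since we only bound the two-sided deviation by $\frac12$.

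The main obstacle I anticipate is step (3): making the chaining argument yield exactly the claimed exponent — one factor $N$ from dimension, one factor $B^q$, and precisely two logarithmic factors — requires splitting the entropy integral at the right scale and carefully balancing the contribution of the $L_\infty$ covering numbers (good at large scales, where \eqref{A1} is sharp) against the $N$-dimensional volumetric covering numbers (good at small scales). One must also handle the $q$-th power nonlinearity: since $t\mapsto |t|^q$ is only locally Lipschitz with constant growing like $(\text{range})^{q-1}$, the a priori sup-bound from step (2) must be used to linearize, and this is exactly the place where the $B^q$ (rather than $B$) and the extra logarithm come from. For $q=1$ this collapses to the clean chaining argument of \cite{VT159}; for general $q$ one essentially reduces to that case after the truncation/contraction, which is presumably how \cite{DPSTT1} proceeds, and I would follow that route rather than reinventing the chaining from scratch.
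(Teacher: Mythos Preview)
The paper does not actually prove Theorem~\ref{AT1}; it merely quotes it as a known result, proved in \cite{VT159} for $q=1$ via the chaining technique and in \cite{DPSTT1} for general $1\le q<\infty$. Your sketch---random i.i.d.\ sampling, symmetrization, Dudley chaining for the Rademacher process with the entropy hypothesis \eqref{A1} feeding the entropy integral, and the Nikol'skii bound from $k=1$ controlling the $|t|^q$ nonlinearity---is precisely the strategy those references follow, so your proposal is correct and in line with the known proof.
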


Here  is a direct corollary of Theorem \ref{AT1}.

\begin{Corollary}\label{AC1} Let $1\le q \le p<\infty$. Suppose that the  condition (\ref{A1}) is satisfied. Then for a large enough constant $C(p,q)$ there exists a set of
$$
m \le C(p,q)B^{p}N^{p/q} (\log_2(2BN))^2
$$
 points $\xi^j\in \Omega$, $j=1,\dots,m$,   such that for any $f\in X_N$
we have
$$
\frac{1}{2}\|f\|_p^p \le \frac{1}{m}\sum_{j=1}^m |f(\xi^j)|^p \le \frac{3}{2}\|f\|_p^p.
$$
\end{Corollary}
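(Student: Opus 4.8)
The plan is to derive Corollary \ref{AC1} from Theorem \ref{AT1} by a change of parameters: if we want to discretize the $L_p$ norm with the exponent $q$-condition \eqref{A1}, we should first observe that $X_N^q \subseteq N^{1/q-1/p} X_N^p$ (by the Nikol'skii-type inequality embedded in \eqref{A1} at $k=1$, which gives $\|f\|_\infty\le 4BN^{1/q}\|f\|_q$, hence in particular control of $\|f\|_p$ by $\|f\|_q$, but more simply by the trivial inclusion direction we actually want the reverse, so let me be careful). The cleanest route is: use \eqref{A1} to extract the entropy condition at the exponent $p$. Concretely, since $\mu$ is a probability measure, $\|f\|_q\le\|f\|_p$ for $q\le p$, so $X_N^p\subseteq X_N^q$, whence $\e_k(X_N^p,L_\infty)\le\e_k(X_N^q,L_\infty)\le B(N/k)^{1/q}$. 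This is not quite of the form $\tilde B(N/k)^{1/p}$ with a clean $\tilde B$, so I will instead bound $(N/k)^{1/q}=(N/k)^{1/p}(N/k)^{1/q-1/p}\le(N/k)^{1/p}N^{1/q-1/p}$ since $k\ge 1$. Therefore $X_N$ satisfies \eqref{A1} with $q$ replaced by $p$ and $B$ replaced by $B':=BN^{1/q-1/p}\ge 1$.

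Having reduced to the case $p=q$ with the new constant $B'$, I apply Theorem \ref{AT1} directly (with its exponent equal to $p$ and its "$B$" equal to $B'$). Theorem \ref{AT1} then yields a set of
$$
m\le C(p)(B')^{p}N(\log_2(2B'N))^2
$$
points with $\tfrac12\|f\|_p^p\le\tfrac1m\sum_{j=1}^m|f(\xi^j)|^p\le\tfrac32\|f\|_p^p$ for all $f\in X_N$, which is exactly the desired two-sided inequality. It remains only to simplify the bound on $m$. We have $(B')^p=B^pN^{p/q-1}$, so $(B')^pN=B^pN^{p/q}$, matching the leading factor in the corollary. For the logarithmic factor, $2B'N=2BN^{1+1/q-1/p}\le 2BN^2$ (crudely, since $1/q-1/p\le 1$), so $\log_2(2B'N)\le\log_2(2BN^2)\le 2\log_2(2BN)$ for $N\ge 2$ (and the case $N=1$ is trivial), giving $(\log_2(2B'N))^2\le C(\log_2(2BN))^2$. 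Absorbing all $p$- and $q$-dependent absolute factors into a constant $C(p,q)$ yields $m\le C(p,q)B^pN^{p/q}(\log_2(2BN))^2$.

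There is no serious obstacle here; the only thing requiring a little care is making sure the intermediate constant $B'$ stays $\ge 1$ (it does, since $B\ge 1$ and $N^{1/q-1/p}\ge 1$) so that Theorem \ref{AT1} applies verbatim, and that the logarithmic-factor bookkeeping does not secretly introduce a dependence on $N$ outside the log — it does not, because $\log_2(2B'N)$ differs from $\log_2(2BN)$ only by a bounded multiplicative constant depending on nothing. The substantive content is entirely contained in Theorem \ref{AT1}; the corollary is the observation that the entropy hypothesis degrades in a completely controlled, dimension-explicit way under the embedding $L_p\hookrightarrow L_q$ for a probability measure.
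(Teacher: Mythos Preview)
Your proof is correct and follows essentially the same route as the paper: use $X_N^p\subseteq X_N^q$ (from $\|f\|_q\le\|f\|_p$ on a probability space) together with $(N/k)^{1/q}\le N^{1/q-1/p}(N/k)^{1/p}$ to verify the entropy condition \eqref{A1} at exponent $p$ with $B'=BN^{1/q-1/p}$, then apply Theorem~\ref{AT1} and simplify. The paper's argument is identical, only more terse.
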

\begin{proof} 
  Let $1\le q\le p <\infty$. Then condition (\ref{A1})
implies
\be\label{A2}
\e_k(X^p_N,L_\infty)\le  \e_k(X^q_N,L_\infty) \le  BN^{1/q-1/p} (N/k)^{1/p}, \quad 1\leq k\le N.
\ee
Thus, condition (\ref{A1}) is satisfied for $p$ with $B'=BN^{1/q-1/p}$. Theorem \ref{AT1} gives
$$
m \le C(p)(B')^{p}N(\log_2(2B'N))^2 \le C(p,q)B^{p}N^{p/q}(\log_2(2BN))^2.
$$
\end{proof}

We now prove an analog of Theorem \ref{AT1} for $q\in (2,\infty)$ under a condition on $X_N$ in terms of the Nikol'skii inequality instead of the entropy condition (\ref{A1}) in Theorem \ref{AT1}. 

\begin{Theorem}\label{AT2} Let $2\le q<\infty$. Suppose that a subspace $X_N$ satisfies the Nikol'skii type inequality
\be\label{A3}
\|f\|_\infty \le  B N^{1/q}\|f\|_q, \quad \forall f\in X_N,
\ee
where $B\ge 1 $.
Then for any $\ep\in(0,1)$ there is a large enough constant $C=C(p, q,\ep)$ such that there exists a set of
\begin{equation}\label{A2.4}
m \le C B^{q}N(\log_2 (2BN))^3 
\end{equation}
points $\xi^j\in \Omega$, $j=1,\dots,m$,   such that for any $f\in X_N$
we have
$$
(1-\ep)\|f\|_q^q \le \frac{1}{m}\sum_{j=1}^m |f(\xi^j)|^q \le (1+\ep)\|f\|_q^q.
$$
\end{Theorem}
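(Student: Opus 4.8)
\medskip
The plan is to reduce the statement to the entropy condition \eqref{A1} (the reason $q\ge 2$ is imposed) and then establish a sharp‑constant discretization under that condition. \emph{First step: from the Nikol'skii inequality to an entropy bound.} Since $q\ge 2$ I would invoke Lemma \ref{AL2} from \cite{VT180}: assumption \eqref{A3}, together with the mild side condition required there (routine to verify in our setting, if necessary after enlarging $B$ by a harmless absolute factor), gives
\begin{equation*}
\e_k(X_N^q,L_\infty)\le C(q)(\log N)^{1/q}B\,(N/k)^{1/q},\qquad 1\le k\le N .
\end{equation*}
Thus $X_N$ satisfies \eqref{A1} with $B$ replaced by $\widetilde B:=C(q)(\log N)^{1/q}B\ge 1$. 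Since $\widetilde B^q\le C'(q)(\log N)B^q$ and $\log(2\widetilde B N)\le C'(q)\log(2BN)$, any quantity of the form $\widetilde B^qN(\log(2\widetilde B N))^2$ is $\le C(q)B^qN(\log(2BN))^3$, which is exactly the target \eqref{A2.4}.

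\emph{Second step: a $(1\pm\ep)$‑discretization under the entropy condition.} Here I would rerun the chaining argument that proves Theorem \ref{AT1} (the case $q=1$ is in \cite{VT159} via chaining, the general case in \cite{DPSTT1}), but carrying the error parameter $\ep$ instead of the fixed gap $[\tfrac12,\tfrac32]$. Concretely: draw $\xi^1,\dots,\xi^m$ i.i.d.\ from $\mu$ with $m$ of order $\ep^{-2}\widetilde B^qN(\log(2\widetilde B N))^2$ and bound $\sup_{f\in X_N,\ \|f\|_q=1}\bigl|\tfrac1m\sum_j|f(\xi^j)|^q-1\bigr|$. For a single $f$ this is Bernstein's inequality, using that $|f(\xi)|^q\le\|f\|_\infty^q\le B^qN$ and $\operatorname{Var}\bigl(|f(\xi)|^q\bigr)\le\|f\|_\infty^q\|f\|_q^q\le B^qN$ by \eqref{A3}. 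The point exploited in the chaining is that for increments $|f|^q-|g|^q$ with $\|f-g\|_q$ small both the boundedness and the variance proxies shrink, so the sub‑Gaussian and sub‑exponential chaining sums are governed by the $L_\infty$‑entropy integral of $X_N^q$, which — thanks to the first step — exhibits the polynomial decay $\log\mathcal N(X_N^q,L_\infty,u)\lesssim N(\widetilde B/u)^q$ on the relevant range rather than a generic logarithmic decay; optimizing over the chain forces $m\lesssim_{q,\ep}\widetilde B^qN(\log(2\widetilde B N))^2\lesssim_q B^qN(\log(2BN))^3$, and the standard symmetrization/chaining machinery converts the bound on the expectation into a positive‑probability statement, producing the desired point set. (For $q=2$ one may shortcut the first step entirely: in an $L_2$‑orthonormal basis write $\tfrac1m\sum_j|f(\xi^j)|^2=c^\ast\bigl(\tfrac1m\sum_j v_jv_j^\ast\bigr)c$, note $\|v_j\|_2^2\le B^2N$ by \eqref{A3}, and apply the matrix Chernoff / Rudelson estimate directly.)

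\emph{The main obstacle} is the chaining in the second step, specifically keeping the point count linear in $N$. A plain $\ep$‑net union bound is too weak: on $X_N^q$ the map $f\mapsto|f(\cdot)|^q$ has $L_\infty$‑Lipschitz constant of order $qB^qN$, an $\ep$‑net of an $N$‑dimensional ball has log‑cardinality of order $N$, and combining these with Bernstein only yields $m\asymp B^qN^2\log(2BN)$. Recovering the correct $N$‑dependence requires the full multi‑scale argument, in which the way the variance proxy scales with the $L_q$‑size of an increment is played off against the polynomial decay of the $L_\infty$‑covering numbers of $X_N^q$ supplied by the entropy condition; doing this with the budget $\ep$ while keeping the $B$‑dependence at the single power $B^q$ is the technical heart. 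Verifying the side hypothesis of Lemma \ref{AL2}, and the elementary bookkeeping turning $\widetilde B^qN(\log(2\widetilde B N))^2$ into $B^qN(\log(2BN))^3$, are routine.
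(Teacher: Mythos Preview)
Your overall architecture (Nikol'skii $\Rightarrow$ entropy bound via Lemma~\ref{AL2} $\Rightarrow$ $(1\pm\ep)$-discretization via the $\ep$-version of Theorem~\ref{AT1}) matches the paper's, and your second step is exactly Remark~\ref{AR1}, so that part and the final bookkeeping are fine. However, there is a genuine gap in your first step.

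Lemma~\ref{AL2} requires, in addition to the Nikol'skii inequality \eqref{A3}, the structural hypothesis $X_N\in\cM(s,\infty,C_1)$ --- i.e.\ the existence of $s$ points discretizing the $L_\infty$ norm on $X_N$ --- and the conclusion carries the factor $(\log s)^{1/q}$, not $(\log N)^{1/q}$. You call this ``routine to verify in our setting, if necessary after enlarging $B$'', but for a general $N$-dimensional $X_N\subset\cC(\Omega)$ on an arbitrary compact $\Omega$ this is precisely the Bernstein discretization problem, which is not routine and is certainly not handled by adjusting $B$. Without an a~priori bound $\log s=O(\log(BN))$ you cannot write $(\log N)^{1/q}$ in your entropy estimate, and the whole budget calculation collapses.

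The paper closes this gap by inserting a preliminary step you omit: it first applies Lemma~\ref{AL1} (a random-sampling result that needs \emph{only} the Nikol'skii inequality) with $\beta=1$, $K=B^q$ to replace $(\Omega,\mu)$ by a discrete space $(\Omega_S,\mu_S)$ with $S\le C B^q\ep^{-2}\log(2/\ep)\,N^2\log N$ points on which $\|\cdot\|_q$ is preserved within $(1\pm\ep)$. On the finite set $\Omega_S$ the hypothesis $X_N(\Omega_S)\in\cM(S,\infty,1)$ is \emph{trivially} satisfied (take all $S$ points), so Lemma~\ref{AL2} now applies with $\log s=\log S=O(\log(2BN))$, yielding the entropy bound \eqref{A10} on the discrete domain. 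Remark~\ref{AR1} is then invoked on $(\Omega_S,\mu_S)$, and the two discretizations are composed. The reduction to a finite domain is the missing idea in your proposal; without it Lemma~\ref{AL2} is not available.
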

\begin{proof} We derive Theorem \ref{AT2} from the $\ep$-version of Theorem \ref{AT1}. 
The following Remark \ref{AR1} is from \cite{DPSTT1}.
\begin{Remark}\label{AR1}
Under the assumptions of Theorem \ref{AT1},  we can deduce a slightly stronger result, namely,   that  for any $\epsilon\in (0,1)$,  there exists a set of $m$ points $\{\xi^j\}_{j=1}^m\subset \Omega$ with
$$
m \le C(q,\epsilon)B^{q}N(\log_2(2BN))^2
$$
	 such that
	\begin{equation*}
	(1-\epsilon)\|f\|_q^q\le \frac{1}{m}\sum_{j=1}^m |f(\xi^j)|^q \le (1+\epsilon)\|f\|_q^q,\   \qquad \forall f\in X_N,
	\end{equation*}
where  $C(q, \epsilon)$ is a positive constant  depending  only on $\epsilon$ and $q$.	
 \end{Remark}

At the first step we use the following Lemma \ref{AL1} from \cite{DPSTT2}. 

\begin{Lemma}\label{AL1} Let $1\leq p<\infty$  be a fixed number. Assume that   $X_N$ is  an $N$-dimensional subspace of $L_\infty(\Og)$ satisfying  the following condition  for  some  parameter $\beta >0$ and constant $K\ge 2$:
	\begin{equation}\label{A4}
	\|f\|_\infty \leq (KN)^{\frac{\beta}{p}} \|f\|_p,\   \   \forall f\in X_N.
	\end{equation}
	Let $\{\bx^j\}_{j=1}^\infty$ be a sequence of independent random points  selected   from $\Omega$ according to $\mu$.
	Then   there exists a  positive   constant  $C_\beta$  depending only on $\beta$  such that for any   $0< \ep\leq \frac 12$ and    \begin{equation}\label{A5}
	m\ge C_\beta K^\beta \ep^{-2}( \log( 2/\ep)) N^{\beta+1}\log N, 
	\end{equation}  
	the   inequality
	\begin{align}
(1-\ep) \|f\|_p^p\leq  \frac{ 1}{m} \sum_{j=1}^m| f(\bx^j)|^p \leq (1+\ep)\|f\|_p^p,
	\end{align}
	holds for all $f\in X_N$  with probability
	$ \ge 1-m^{-N/\log K}$.
	
\end{Lemma}

By Lemma \ref{AL1} with $p=q$, $\beta =1$, $K=B^q$, and $m=S$ we replace the $L_q(\Omega,\mu)$ space 
by the $L_q(\Omega_S,\mu_S)$ with $\Omega_S= \{\bx^j\}_{j=1}^S$, $\mu_S(\bx^j)=1/S$, $j=1,\dots, S$ with the relations
\be\label{A6}
(1-\ep)\|f\|_{L_q(\Omega,\mu)}^q \le \|f\|_{L_q(\Omega_S,\mu_S)}^q \le (1+\ep)\|f\|_{L_q(\Omega,\mu)}^q,
\ee
\be\label{A7}
 S\le C_2B^q \ep^{-2}( \log( 2/\ep)) N^{2}\log N.
\ee

At the second step we apply the following Lemma \ref{AL2} from \cite{VT180}, which is based on the corresponding results from \cite{Kos}, with $s=S$ to the restriction $X_N(\Omega_S)$ of the 
subspace $X_N$ onto the $\Omega_S$ in the case of $L_q(\Omega_S,\mu_S)$. 

\begin{Lemma}\label{AL2} Let $q\in [2,\infty)$. Assume that for any $f\in X_N$ we have
\be\label{A8}
\|f\|_\infty \le BN^{1/q}\|f\|_q
\ee
with some constant $B\ge 1$. Also, assume that $X_N \in \cM(s,\infty,C_1)$ with $s\ge 1$.
Then for $k\in [1,N]$ we have  
\be\label{A9}
\e_k(X_N^q, L_\infty) \le C(q,C_1)(\log s)^{1/q} B(N/k)^{1/q} .
\ee
\end{Lemma}

This gives us the bound
\be\label{A10}
\e_k(X_N^q(\Omega_S), L_\infty) \le C(q,\ep,C_2)B (\log (2BN))^{1/q} (N/k)^{1/q} .
\ee

At the third step we apply Remark \ref{AR1} to the $X_N(\Omega_S)$ and complete the proof.

\end{proof}

Here  is a direct corollary of Theorem \ref{AT2}.

\begin{Corollary}\label{AC2} Let $2\le q \le p<\infty$. Suppose that condition (\ref{A3}) is satisfied
with $B\ge 1$. Then for any $\ep\in(0,1)$ there is a large enough constant $C=C(p,q,\ep)$ such that there exists a set of
\be\label{m}
m \le C B^{p}N^{p/q}(\log_2(2BN))^3 
\ee
 points $\xi^j\in \Omega$, $j=1,\dots,m$,   such that for any $f\in X_N$
we have
$$
(1-\ep)\|f\|_p^p \le \frac{1}{m}\sum_{j=1}^m |f(\xi^j)|^p \le (1+\ep)\|f\|_p^p.
$$
\end{Corollary}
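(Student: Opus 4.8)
The plan is to mimic the derivation of Corollary \ref{AC1} from Theorem \ref{AT1}, but this time starting from Theorem \ref{AT2} instead. The key observation is that the Nikol'skii inequality for the pair $(q,\infty)$ automatically upgrades to a Nikol'skii inequality for the pair $(p,\infty)$ whenever $p\ge q$, at the cost of an extra factor that is a power of $N$.

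First I would note that for $1\le q\le p<\infty$ and any $f\in X_N$ one has, by the trivial inclusion $L_p\subset L_q$ on a probability space (so $\|f\|_q\le\|f\|_p$), the chain
$$
\|f\|_\infty \le BN^{1/q}\|f\|_q \le BN^{1/q}\|f\|_p = \bigl(BN^{1/q-1/p}\bigr) N^{1/p}\|f\|_p.
$$
Hence $X_N$ satisfies condition (\ref{A3}) with $q$ replaced by $p$ and with constant $B' := BN^{1/q-1/p}\ge 1$ (here we use $p\ge q\ge 2$, so in particular $p\in[2,\infty)$, which is exactly the range where Theorem \ref{AT2} applies).

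Next I would simply invoke Theorem \ref{AT2} with the exponent $p$ in place of $q$ and with the constant $B'$ in place of $B$. This yields, for any $\ep\in(0,1)$, a set of $m$ points with the two-sided bound for the $L_p$ norm, where
$$
m \le C(p,\ep)(B')^{p}N(\log_2(2B'N))^3 = C(p,\ep)B^{p}N^{p/q-1}\cdot N\cdot(\log_2(2B'N))^3.
$$
Since $B'N = BN^{1/q-1/p+1}\le BN^{2}$ (using $1/q-1/p<1$), we have $\log_2(2B'N)\le \log_2(2BN^2)\le 2\log_2(2BN)$, so the logarithmic factor is absorbed into the constant and we obtain $m\le C(p,q,\ep)B^{p}N^{p/q}(\log_2(2BN))^3$, which is exactly (\ref{m}).

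This argument is essentially a two-line reduction, so there is no serious obstacle; the only point requiring a moment of care is verifying that the hypotheses of Theorem \ref{AT2} are genuinely met after the substitution, namely that $p\in[2,\infty)$ (which follows from $p\ge q\ge 2$) and that the new constant $B'=BN^{1/q-1/p}$ is indeed $\ge 1$ (which follows from $B\ge 1$ and $1/q\ge 1/p$). The bookkeeping with the logarithmic factor and the collapse of $N^{p/q-1}\cdot N$ into $N^{p/q}$ is routine.
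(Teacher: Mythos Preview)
Your argument is correct and is essentially identical to the paper's own proof: both upgrade the Nikol'skii inequality from $(q,\infty)$ to $(p,\infty)$ via $\|f\|_q\le\|f\|_p$, set $B'=BN^{1/q-1/p}$, and then apply Theorem \ref{AT2} at exponent $p$. You have simply added the explicit check that $B'\ge1$, that $p\ge2$, and the routine bound on the logarithmic factor, none of which the paper spells out.
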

\begin{proof} 
  Let $2\le q\le p <\infty$. Then condition (\ref{A3})
implies
\be\label{A11}
\|f\|_\infty\le B N^{1/q}\|f\|_q \le  BN^{1/q-1/p} N^{1/p}\|f\|_p, \quad 1\leq k\le N.
\ee
Thus, condition (\ref{A3}) is satisfied for $p$ with $B'=BN^{1/q-1/p}$. Theorem \ref{AT2} gives
$$
m \le C(p,\ep)(B')^{p}N(\log_2((2B'N)))^3 \le C(p, q,\ep)B^{p}N^{p/q}(\log_2(2BN) )^3.
$$
\end{proof}

\begin{Remark}\label{AR2} Corollary \ref{AC2} provides bound (\ref{m}) on $m$ that consists of
three factors: $N^{p/q}$, $(\log_2 (2BN))^3$, which grow with $N$, and $B^p$, which may grow with $N$.
We do not know if the factor $(\log_2 (2BN))^3$ can be dropped in (\ref{m}). However, we know that in the case 
$q>2$ neither $N^{p/q}$ can be replaced by $N^{p/q-\delta}$ nor $B^p$ can be replaced by $B^{p-\delta}$ with any $\delta>0$. In the case $q=2$ the factor $N^{p/2}$ cannot be replaced by $N^{p/2-\delta}$ with any $\delta>0$.
\end{Remark}
\begin{proof} We begin with the case $q=2$.  Let $\Lambda_n = \{k_j\}_{j=1}^n$ be a lacunary sequence: $k_1=1$, $k_{j+1} \ge bk_j$, $b>1$, $j=1,\dots,n-1$. Denote
$$
\Tr(\Lambda_n):= \left\{f\,:\, f(x) = \sum_{k\in \Lambda_n} c_ke^{ikx},\quad x\in\T \right\}.  
$$
It is clear that $\Tr(\Lambda_n) \in \cN(2,\infty,1)$. Indeed, for $f(x) = \sum_{k\in \Lambda_n} c_ke^{ikx}$ we have
\be\label{Anik1}
\|f\|_\infty \le \sum_{k\in \Lambda_n} |c_k| \le n^{1/2}\left(\sum_{k\in \Lambda_n} |c_k|^2\right)^{1/2} = n^{1/2}\|f\|_2.
\ee
It is proved in \cite{KKLT} (see {\bf D.20. A lower bound}) that the condition $\Tr(\Lambda_n) \in 
\cM(m,p,C_1,C_2)$ with $p>2$ and fixed $C_1$ and $C_2$ implies $ m\ge C(p,C_1,C_2) n^{p/2}$. 
This completes the proof of Remark \ref{AR2} in the case $q=2$. 

We now let $q>2$ and rewrite (\ref{Anik1}) in the form
$$
\|f\|_\infty \le n^{1/2}\|f\|_2 \le n^{1/2-1/q} n^{1/q}\|f\|_q.
$$
Therefore, $\Tr(\Lambda_n) \in \cN(q,\infty,n^{1/2-1/q})$. Then, on one hand we know from the above that 
$m$ must grow in the sense of order as $n^{p/2}$, on the other hand if we replace in (\ref{m}) $N^{p/q}$ by $N^{p/q-\delta}$, then we obtain 
$$
m \le C n^{p(1/2-1/q)}n^{p/q-\delta} (\log n)^3 \le Cn^{p/2-\delta}(\log n)^3.
$$
We get a contradiction. In the same way we get a contradiction in the case of replacement $B^p$  by $B^{p-\delta}$. The proof is complete.

\end{proof}

{\bf A comment.} Sometimes it is convenient to have the entropy bound  (\ref{A1}) for all $k\in \N$, 
namely, the bound
\be\label{A1inf}
\e_k(X^q_N,L_\infty) \le  B_1 (N/k)^{1/q}, \quad 1\leq k <\infty. 
\ee
We now prove that (\ref{A1}) implies (\ref{A1inf}) with $B_1=6B$.
We begin by pointing out that the assumption \eqref{A1} for $k=N$ implies the inequality
\begin{equation}\label{A12}
\e_k(X_N^q, L_\infty)\le 6B2^{-k/N}  \   \  \text{ for $k>N$}.
\end{equation}
This  follows directly  from the facts that for each Banach  space $X$ (see \cite[(7.1.6), p. 323]{VTbookMA}),
$$
\e_k(A,X) \le \e_N(A,X)\e_{k-N}(B_X,X), \  \ k>N,
$$
and for each $N$-dimensional space $X$ (see \cite[Corollary 7.2.2, p. 324]{VTbookMA}),
$$
\e_m(B_X,X) \le 3(2^{-m/N}).
$$
Next, we use the inequality $2^x\ge 1+x$, for $x\ge 1$. This inequality follows from ($x\ge 1$, $a=\ln 2$)
$$
e^{ax} = 1+ax+\frac{(ax)^2}{2!} +\dots \ge 1+x\left(a+\frac{a^2}{2!}+\dots\right) = 1+x(e^a-1) =1+x.
$$
Therefore, condition (\ref{A1}) implies condition (\ref{A1inf}) with $B_1= 6B$. 
 
\section{Discretization in subspaces with tensor product structure}
\label{B}

Let $s\in\N$. Suppose that we have $s$ subspaces $X(N_i,i)\subset \cC(\Omega_i)$
 with $\dim X(N_i,i) = N_i$, $i=1,\dots,s$. Denote $\bN := (N_1,\dots,N_s)$ and 
 $$
 \bX_{\bN} := \sp\{ f_1(\bx^1)\times\cdots\times f_s(\bx^s)\,:\, f_i\in X(N_i,i), i=1,\dots,s\}
 $$
 a subspace of $\cC(\Omega_1\times\cdots\times\Omega_s)$. Consider a product measure $\mu =\mu_1\times\cdots\times\mu_s$ on $\Omega:=\Omega_1\times\cdots\times\Omega_s$ with $\mu_i$ being a probability measure on $\Omega_i$, $i=1,\dots,s$. First, we prove some discretization results 
 for the $\bX_{\bN}$ under the conditions
 \be\label{B1}
\e_k(X^q(N_i,i),L_\infty) \le  B_i (N_i/k)^{1/q}, \quad 1\leq k <\infty, \quad i=1,\dots,s,
\ee
where $B_i\ge 1$, $i=1,\cdots, s$.
Note, that as it is explained at the end of Section \ref{A}, conditions (\ref{B1}) are equivalent to the same conditions with a weaker restrictions on $k$: instead of 
$1\leq k <\infty$ we can take $1\leq k\le N_i$. We begin with a simple observation.

\begin{Proposition}\label{BP1} Let $1\le q \le p<\infty$. Suppose that conditions (\ref{B1}) are satisfied. Then for a large enough constant $C(p,q,s)$ there exists a set $\xi(m)$ with a tensor product structure of
$$
m \le C(p,q,s)\prod_{i=1}^sB_i^{p}N_i^{p/q} (\log_2(2B_iN_i))^2
$$
 points $\xi^j\in \Omega$, $j=1,\dots,m$,   such that for any $f\in \bX_\bN$
we have
\be\label{B2}
\left(\frac{1}{2}\right)^s\|f\|_p^p \le \frac{1}{m}\sum_{j=1}^m |f(\xi^j)|^p \le \left(\frac{3}{2}\right)^s\|f\|_p^p.
\ee
\end{Proposition}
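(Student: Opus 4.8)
The strategy is to discretize one coordinate at a time, peeling off factors of the product. The key point is that for a fixed choice of points in the first $s-1$ coordinates, the section of a function $f \in \bX_{\bN}$ in the last variable lies in $X(N_s,s)$, so we may apply Corollary \ref{AC1} to that last factor; iterating this and multiplying the resulting point sets together yields a tensor-product node set. Concretely, I would first apply Corollary \ref{AC1} to $X(N_s,s)$ with parameters $p,q$: condition (\ref{B1}) for $i=s$ gives a set $\{\eta^{j_s}\}_{j_s=1}^{m_s} \subset \Omega_s$ with $m_s \le C(p,q) B_s^p N_s^{p/q}(\log_2(2B_sN_s))^2$ and
$$
\tfrac12 \|g\|_{L_p(\Omega_s,\mu_s)}^p \le \frac{1}{m_s}\sum_{j_s=1}^{m_s} |g(\eta^{j_s})|^p \le \tfrac32 \|g\|_{L_p(\Omega_s,\mu_s)}^p, \qquad \forall g \in X(N_s,s).
$$
Apply this with $g(\bx^s) = f(\bx^1,\dots,\bx^{s-1},\bx^s)$ for each fixed $(\bx^1,\dots,\bx^{s-1})$, then integrate in $\mu_1\times\cdots\times\mu_{s-1}$ and use Fubini. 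This replaces $\int_{\Omega}|f|^p\,d\mu$ (up to the two-sided factor $\tfrac12,\tfrac32$) by $\frac{1}{m_s}\sum_{j_s}\int_{\Omega_1\times\cdots\times\Omega_{s-1}} |f(\cdot,\eta^{j_s})|^p$, i.e.\ an average over $\Omega_1\times\cdots\times\Omega_{s-1}$ of $L_p$ norms of functions that, for each $j_s$, still lie in $\bX_{(N_1,\dots,N_{s-1})}$.

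Next I would iterate: repeat the same step on coordinate $s-1$, then $s-2$, and so on down to coordinate $1$. At stage $i$ we get a point set $\{\eta^{j_i}\}_{j_i=1}^{m_i}\subset\Omega_i$ of size $m_i \le C(p,q)B_i^p N_i^{p/q}(\log_2(2B_iN_i))^2$, each application contributing another two-sided factor lying in $[\tfrac12,\tfrac32]$. After all $s$ steps, the composite node set is the Cartesian product $\xi(m) = \{(\eta^{j_1},\dots,\eta^{j_s})\}$ with $m = \prod_{i=1}^s m_i \le C(p,q)^s \prod_{i=1}^s B_i^p N_i^{p/q}(\log_2(2B_iN_i))^2$, which has the claimed tensor product structure and the claimed cardinality bound (absorbing $C(p,q)^s$ into $C(p,q,s)$). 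Multiplying the $s$ two-sided inequalities gives exactly (\ref{B2}) with constants $(\tfrac12)^s$ and $(\tfrac32)^s$.

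The one point requiring a little care — and the main technical obstacle — is justifying that the argument composes cleanly, i.e.\ that after discretizing coordinate $s$ the intermediate object $\frac{1}{m_s}\sum_{j_s}|f(\cdot,\eta^{j_s})|^p$ is again (for each fixed $j_s$) the $p$-th power of the $L_p$ norm of an element of a tensor-product subspace built from the first $s-1$ factors, so that the inductive hypothesis applies. This is really just the observation that a section of a tensor product of subspaces in all but one variable is again such a tensor product, together with Fubini's theorem to interchange the finite sums and the remaining integrals; one should state it as an induction on $s$, with base case $s=1$ being exactly Corollary \ref{AC1}. No new inequality is needed beyond Corollary \ref{AC1}; the logarithmic and $B_i$ factors come out automatically from the per-coordinate bound, and the only loss is the multiplicative blow-up of the constants from $\tfrac12,\tfrac32$ to $(\tfrac12)^s,(\tfrac32)^s$, which is exactly what the statement allows.
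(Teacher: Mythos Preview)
Your proposal is correct and follows essentially the same approach as the paper: apply Corollary \ref{AC1} to each factor $X(N_i,i)$ to obtain point sets $\xi(m_i,i)\subset\Omega_i$ with the stated cardinality bounds, then apply the resulting one-dimensional discretization inequalities successively across the coordinates (using that sections of $f\in\bX_{\bN}$ lie in the appropriate factor subspace, together with Fubini) and take $\xi(m)=\xi(m_1,1)\times\cdots\times\xi(m_s,s)$. The paper's proof is simply a terser version of exactly this argument.
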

\begin{proof} By Corollary \ref{AC1} for each subspace $X(N_i,i)$, $i=1,\dots,s$, we find a set $\xi(m_i,i)=\{\xi^j(m_i,i)\}_{j=1}^{m_i}\subset \Omega_i$ of $m_i$ points such that 
\be\label{B3}
m_i \le C(p,q)B_i^{p}N_i^{p/q} (\log_2(2B_iN_i))^2
\ee
 and for any $f\in X(N_i,i)$ we have
\be\label{B4}
\frac{1}{2}\|f\|_p^p \le \frac{1}{m_i}\sum_{j=1}^{m_i} |f(\xi^j(m_i,i))|^p \le \frac{3}{2}\|f\|_p^p.
\ee
Then applying (\ref{B4}) successively with respect to $i=1,\dots,s$ we obtain for $f\in \bX_\bN$ inequalities (\ref{B2}) with $\xi(m) = \xi(m_1,1)\times\cdots\times\xi(m_s,s)$
and by (\ref{B3})
$$
m=\prod_{i=1}^s m_i \le C(p,q)^s\prod_{i=1}^sB_i^{p}N_i^{p/q} (\log_2(2B_iN_i))^2.
$$
This completes the proof. 
\end{proof}

Second, we discuss some results, when instead of conditions (\ref{B1}) we impose conditions in terms of Nikol'skii-type inequalities: For any $f\in X(N_i,i)$
 \be\label{B5}
\|f\|_\infty  \le  B_i N_i^{1/q}\|f\|_q,   \quad i=1,\dots,s.
\ee
 	
In the same way as Proposition \ref{BP1} was derived from the discretization result -- Corollary \ref{AC1} -- we derive from Corollary \ref{AC2} the following statement.

\begin{Proposition}\label{BP2} Let $2\le q \le p<\infty$. Suppose that conditions (\ref{B5}) are satisfied with $B_i\ge 1 $, $i=1,\dots,s$. Then for a  large enough constant $C=C(p,q,s)$ there exists a set $\xi(m)$ with a tensor product structure of
\be\label{B8m}
m \le C \prod_{i=1}^sB_i^{p}N_i^{p/q} (\log_2(2B_i  N_i) )^3
\ee
 points $\xi^j\in \Omega$, $j=1,\dots,m$,   such that for any $f\in \bX_\bN$
we have
\be\label{B9}
\left(\frac{1}{2}\right)^s\|f\|_p^p \le \frac{1}{m}\sum_{j=1}^m |f(\xi^j)|^p \le \left(\frac{3}{2}\right)^s\|f\|_p^p.
\ee
\end{Proposition}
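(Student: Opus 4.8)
The plan is to mirror exactly the argument used for Proposition \ref{BP1}, but with Corollary \ref{AC2} in place of Corollary \ref{AC1}. The point is that the tensor-product discretization essentially reduces to applying a one-dimensional discretization result on each factor and then multiplying the point sets together.

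First I would invoke Corollary \ref{AC2} for each of the $s$ subspaces $X(N_i,i)$, $i=1,\dots,s$, which is legitimate because hypothesis \eqref{B5} is precisely the Nikol'skii-type inequality \eqref{A3} with $N=N_i$, $B=B_i$, and the same exponent $q\in[2,\infty)$ (and $q\le p<\infty$). This produces, for each $i$ and each fixed $\ep\in(0,1)$ (say $\ep$ chosen so that $(1+\ep)<3/2$ and $(1-\ep)>1/2$; even $\ep=1/2$ works since the one-factor tolerance only needs to lie inside $[1/2,3/2]$), a point set $\xi(m_i,i)=\{\xi^j(m_i,i)\}_{j=1}^{m_i}\subset\Omega_i$ with
$$
m_i\le C(p,q)B_i^{p}N_i^{p/q}(\log_2(2B_iN_i))^3
$$
such that for all $f\in X(N_i,i)$,
$$
\tfrac12\|f\|_{L_p(\Omega_i,\mu_i)}^p\le \frac{1}{m_i}\sum_{j=1}^{m_i}|f(\xi^j(m_i,i))|^p\le \tfrac32\|f\|_{L_p(\Omega_i,\mu_i)}^p.
$$

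Next I would set $\xi(m):=\xi(m_1,1)\times\cdots\times\xi(m_s,s)$, so $m=\prod_{i=1}^s m_i$, and then establish \eqref{B9} by peeling off one coordinate at a time. For a general $f\in\bX_\bN$ and a fixed $p$, the function $|f|^p$ need not itself factor, but one can integrate/discretize coordinate by coordinate: writing the discrete average over $\xi(m)$ as an $s$-fold iterated average, one replaces the average in the first coordinate by the integral over $(\Omega_1,\mu_1)$ at the cost of a factor in $[1/2,3/2]$ — here the key observation is that for each fixed choice of the remaining coordinates $(\bx^2,\dots,\bx^s)$, the function $\bx^1\mapsto f(\bx^1,\dots,\bx^s)$ lies in $X(N_1,1)$, so the one-dimensional estimate applies — then repeat for the second coordinate, and so on. After $s$ steps one has passed from the full discrete average to $\|f\|_{L_p(\Omega,\mu)}^p$ with a multiplicative error lying in $[(1/2)^s,(3/2)^s]$, which is \eqref{B9}. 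Finally, bounding $m=\prod_i m_i\le C(p,q)^s\prod_i B_i^p N_i^{p/q}(\log_2(2B_iN_i))^3$ gives \eqref{B8m} with $C=C(p,q,s):=C(p,q)^s$.

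The only mildly delicate point — and the one I would be careful to state precisely — is the coordinate-by-coordinate peeling when $p\ne 2$: one must note that slicing a tensor-product-spanned function in one variable leaves a function of the frozen variables that still lies in the relevant one-dimensional subspace, and that the two-sided bound \eqref{B4}-type inequality, applied with the frozen variables as parameters and then averaged/integrated over those parameters, composes multiplicatively. This is exactly the mechanism already used (and asserted to work) in the proof of Proposition \ref{BP1}, so in the write-up I would simply say ``applying \eqref{B5}-type discretization successively with respect to $i=1,\dots,s$, exactly as in the proof of Proposition \ref{BP1}'' rather than redo the bookkeeping; there is no genuine obstacle beyond this routine induction.
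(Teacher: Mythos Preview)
Your proposal is correct and follows exactly the approach the paper takes: the paper simply states that Proposition \ref{BP2} is derived from Corollary \ref{AC2} in the same way Proposition \ref{BP1} was derived from Corollary \ref{AC1}, which is precisely the factor-by-factor discretization and tensor-product construction you describe. Your discussion of the coordinate-by-coordinate peeling is in fact more detailed than what the paper provides.
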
	 
	 
In a particular case when $N_i=N$ and $B_i=B$ for  $i=1,\dots,s$, the extra logarithmic factor in (\ref{B8m}) will be of order $(\log(2B  N))^{3s}$. We now show how it could be reduced to 
$(\log (2BN))^{3}$. 
We need the following lemma, which is a particular case of Theorem 3.3.3 from \cite[p.107]{VTbookMA}  in the  case of periodic functions with $\Omega_i = \T$, $i=1,\dots,s$.

\begin{Lemma}\label{BL1} Let $1\le q<\infty$. Suppose that conditions (\ref{B5}) are satisfied. Then for $f\in \bX_\bN$ we have
$$
\|f\|_\infty \le \left(\prod_{i=1}^s (B_iN_i^{1/q})\right)\|f\|_q.
$$
\end{Lemma}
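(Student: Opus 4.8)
The plan is to prove the tensor-product Nikol'skii inequality of Lemma~\ref{BL1} by iterating the one-dimensional hypotheses~(\ref{B5}) one coordinate at a time, exactly in the spirit of the iteration used in the proof of Proposition~\ref{BP1}. The key point is that for a function $f\in\bX_\bN$ the map $\bx^i\mapsto f(\bx^1,\dots,\bx^s)$, with the remaining variables fixed, lies in $X(N_i,i)$; this is immediate from the definition of $\bX_\bN$ as the span of tensor products $f_1\otimes\cdots\otimes f_s$, and it is preserved under taking finite linear combinations.

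First I would fix $f\in\bX_\bN$ and apply~(\ref{B5}) in the first variable: for each fixed $(\bx^2,\dots,\bx^s)$,
$$
\sup_{\bx^1\in\Omega_1}|f(\bx^1,\bx^2,\dots,\bx^s)|\le B_1N_1^{1/q}\left(\int_{\Omega_1}|f(\bx^1,\bx^2,\dots,\bx^s)|^q\,d\mu_1(\bx^1)\right)^{1/q}.
$$
Raising to the $q$-th power and integrating in $(\bx^2,\dots,\bx^s)$ against $\mu_2\times\cdots\times\mu_s$, then using that the supremum over $\bx^1$ dominates the value at any fixed $\bx^1$, gives
$$
\left\|\,\sup_{\bx^1}|f(\bx^1,\cdot)|\,\right\|_{L_q(\Omega_2\times\cdots\times\Omega_s)}\le B_1N_1^{1/q}\|f\|_{L_q(\Omega)}.
$$
Now the function $g(\bx^2,\dots,\bx^s):=\sup_{\bx^1}|f(\bx^1,\bx^2,\dots,\bx^s)|$ need not itself lie in a tensor-product subspace, so I would instead carry $f$ itself through all coordinates and only take suprema at the end: apply~(\ref{B5}) successively in variables $\bx^1,\bx^2,\dots,\bx^s$, at each stage estimating $\|\cdot\|_\infty$ in the current variable by $B_iN_i^{1/q}$ times the $L_q$-norm in that variable (with all other variables still free), using Fubini to reorganize the mixed sup/integral expression, and using at every stage that the relevant one-variable section still belongs to $X(N_i,i)$. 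After $s$ steps one arrives at $\|f\|_\infty\le\bigl(\prod_{i=1}^s B_iN_i^{1/q}\bigr)\|f\|_q$, which is the claim.

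The one genuine subtlety — and the step I expect to need the most care — is the bookkeeping that keeps the iteration legitimate: after bounding the sup in $\bx^1$, the resulting quantity is $\bigl\|\,\|f(\cdot,\bx^2,\dots,\bx^s)\|_{L_\infty(\Omega_1)}\,\bigr\|$ measured in the remaining variables, and to apply~(\ref{B5}) in $\bx^2$ one must interchange $\|\cdot\|_{L_\infty(\Omega_1)}$ with $\|\cdot\|_{L_\infty(\Omega_2)}$ (harmless, since $\|\cdot\|_{L_\infty(\Omega_1\times\Omega_2)}$ equals the iterated sup in either order) and then with $\|\cdot\|_{L_q(\Omega_2)}$, which requires the elementary inequality $\|h\|_{L_\infty(\Omega_1)}$ pointwise-in-$\bx^2$ dominated correctly so that Minkowski's integral inequality (or just monotonicity of the integral) applies — concretely, $\sup_{\bx^1}\|h(\bx^1,\cdot)\|_{L_q(\Omega_2)}^q\le\bigl\|\sup_{\bx^1}|h(\bx^1,\cdot)|\bigr\|_{L_q(\Omega_2)}^q$ is false in general, so one must be careful to always expand the sup \emph{outside} the integral, i.e. keep the $L_\infty$-norms on the outside and the $L_q$-norms innermost, peeling coordinates from the inside out. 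Organizing the induction so that at step $k$ one has controlled $\sup_{\bx^1,\dots,\bx^{k}}\|f\|_{L_q(\Omega_{k+1}\times\cdots\times\Omega_s)}$ by $\bigl(\prod_{i\le k}B_iN_i^{1/q}\bigr)\|f\|_{L_q(\Omega)}$, and noting that each section in the variable $\bx^{k+1}$ (with $\bx^1,\dots,\bx^k$ fixed at the points realizing—or approximating—the sup, and $\bx^{k+2},\dots,\bx^s$ free) lies in $X(N_{k+1},k+1)$, closes the argument. Since the excerpt flags Lemma~\ref{BL1} as a special case of a known theorem from \cite{VTbookMA}, a short proof along these lines, or simply a citation to that reference, suffices.
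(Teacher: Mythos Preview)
Your proposal is correct and ultimately uses the same idea as the paper's proof, but you have made the organization much harder than necessary. The paper avoids every one of the ``subtleties'' you worry about by fixing a \emph{single} point $(\bx^1,\dots,\bx^s)\in\Omega$ from the outset and bounding $|f(\bx^1,\dots,\bx^s)|^q$ directly: apply~(\ref{B5}) in the first variable to the section $f(\cdot,\bx^2,\dots,\bx^s)\in X(N_1,1)$ to get
$|f(\bx^1,\dots,\bx^s)|^q \le B_1^qN_1\int_{\Omega_1}|f(\by^1,\bx^2,\dots,\bx^s)|^q\,d\mu_1(\by^1)$,
then for each fixed $\by^1$ apply~(\ref{B5}) in the second variable to $f(\by^1,\cdot,\bx^3,\dots,\bx^s)\in X(N_2,2)$ under the integral sign, and so on. After $s$ steps one has $|f(\bx^1,\dots,\bx^s)|^q\le\bigl(\prod_i B_i^qN_i\bigr)\|f\|_q^q$; take the supremum over the point and you are done. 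Because the point is fixed throughout, there is no mixed sup/integral expression to manage, no need for Minkowski's inequality, and no question of whether a sup of sections remains in the subspace --- your own inductive statement, once unpacked pointwise in $(\bx^{k+2},\dots,\bx^s)$, reduces to exactly this computation.
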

\begin{proof}
 Let $f\in \bX_\bN$.  Then  for any $\bx=(\bx^1, \bx^2) \in\Omega_1\times \Omega_2$, we have 
\begin{align*}
|f(\bx^1, \bx^2)|^q & \leq B_1^q N_1 \int_{\Omega_1} |f(\by^1, \bx^2)|^q\, d\mu_1(\by^1)\\
&\leq  B_1^q N_1 B_2^q N_2  \int_{\Omega_1} \Bigl[ \int_{\Omega_2} |f(\by^1, \by^2)|^qd\mu_2(\by^2) \Bigr]\, d\mu_1(\by^1),
\end{align*}
 where we used  (\ref{B5}) for $i=1$ and  the fact  that $f(\cdot, \bx^2) \in X(N_1, 1)$ in the first step, and  (\ref{B5}) for $i=2$ and  the fact  that $f(\by^1, \cdot) \in X(N_2, 2)$ for each fixed  $\by^1\in\Omega_1$ in the second step.
This proves the stated inequality for $s=2$. The inequality  for  the general case  $s\ge 2$ follows by induction. 

\end{proof}
 
 Using Lemma \ref{BL1}, we may apply  Corollary \ref{AC2} to the space $X_N:=\bX_{\bN}$ with  $N=\prod_{i=1}^s N_i$ and $B=\prod_{i=1}^s B_i$. We then obtain  the following version of Proposition \ref{BP2}.
 
 \begin{Proposition}\label{BP3} Let $2\le q \le p<\infty$. Suppose that the  conditions (\ref{B5}) are satisfied with $B_i\ge 1$, $i=1,\dots,s$. Then for a large enough constant $C=C(p,q)$ there exists a set $\xi(m)$  of
\be\label{B10}
m \le C \left(\prod_{i=1}^sB_i^{p}N_i^{p/q}\right) \left(\log_2(\prod_{i=1}^s(B_i  N_i))\right)^3
\ee
 points $\xi^j\in \Omega$, $j=1,\dots,m$,   such that 
\be\label{B11}
\frac{1}{2}\|f\|_p^p \le \frac{1}{m}\sum_{j=1}^m |f(\xi^j)|^p \le \frac{3}{2} \|f\|_p^p,\   \ \forall f\in \bX_\bN.
\ee
\end{Proposition}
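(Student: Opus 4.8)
The plan is to reduce Proposition~\ref{BP3} directly to Corollary~\ref{AC2} applied to the single subspace $X_N := \bX_{\bN}$. First I would set $N := \prod_{i=1}^s N_i = \dim \bX_{\bN}$ and $B := \prod_{i=1}^s B_i$. The point of Lemma~\ref{BL1} is precisely that the individual Nikol'skii inequalities \eqref{B5} for the factor subspaces combine multiplicatively, so that for every $f \in \bX_{\bN}$ we have
$$
\|f\|_\infty \le \left(\prod_{i=1}^s B_i N_i^{1/q}\right)\|f\|_q = B N^{1/q}\|f\|_q .
$$
Thus $\bX_{\bN}$ satisfies hypothesis \eqref{A3} of Corollary~\ref{AC2} with the parameters $N$ and $B$ just defined (and $B \ge 1$ since each $B_i \ge 1$).

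Next I would simply invoke Corollary~\ref{AC2}: for the given $2 \le q \le p < \infty$ and any fixed $\ep \in (0,1)$ (say $\ep = 1/2$, which is all we need for the $\tfrac12,\tfrac32$ constants in \eqref{B11}), there is a constant $C = C(p,q)$ and a set of $m$ points $\xi^j \in \Omega$ with
$$
m \le C B^p N^{p/q}(\log_2(2BN))^3
$$
such that $\tfrac12\|f\|_p^p \le \tfrac1m\sum_{j=1}^m |f(\xi^j)|^p \le \tfrac32\|f\|_p^p$ for all $f \in \bX_{\bN}$. Substituting $B = \prod_i B_i$ and $N = \prod_i N_i$ gives
$$
m \le C \left(\prod_{i=1}^s B_i^p N_i^{p/q}\right)\left(\log_2\Big(2\prod_{i=1}^s (B_i N_i)\Big)\right)^3,
$$
which, absorbing the harmless factor of $2$ inside the logarithm into the constant, is exactly the bound \eqref{B10}; this proves \eqref{B11}.

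There is essentially no obstacle here — the proposition is a packaging of Lemma~\ref{BL1} together with Corollary~\ref{AC2}. The only minor points to be careful about are bookkeeping ones: that $\bX_{\bN}$ genuinely has dimension $\prod_i N_i$ (a standard fact about tensor products of finite-dimensional spaces, so that "$N$" in Corollary~\ref{AC2} really is $\prod_i N_i$ and not smaller); that $L_q(\Omega,\mu)$ with the product measure $\mu = \mu_1 \times \cdots \times \mu_s$ is the right ambient space, which is implicit in the statement; and the cosmetic adjustment of the constant $2$ inside $\log_2(2BN)$ versus $\log_2(\prod_i(B_iN_i))$ in \eqref{B10}. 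One may also remark, for contrast with Proposition~\ref{BP2}, that unlike the product-of-discretizations construction there, the set $\xi(m)$ produced here need not have a tensor product structure — the statement of Proposition~\ref{BP3} correctly omits that claim — and this is exactly why the logarithmic overhead collapses from $(\log(2BN))^{3s}$ to $(\log(2BN))^3$.
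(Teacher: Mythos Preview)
Your proposal is correct and follows exactly the approach of the paper: the paper states (just before Proposition~\ref{BP3}) that one applies Corollary~\ref{AC2} to $X_N:=\bX_{\bN}$ with $N=\prod_i N_i$ and $B=\prod_i B_i$, using Lemma~\ref{BL1} to verify the Nikol'skii hypothesis \eqref{A3}. Your write-up simply fleshes this out, including the choice $\ep=1/2$ and the cosmetic absorption of the factor $2$ in the logarithm, and your closing remark about the loss of tensor-product structure matches the paper's own comment following the proposition.
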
	

 Clearly,  Proposition \ref{BP3} is a better version of Proposition \ref{BP2}. However, there is no guarantee that   the set $\xi(m)$   in Proposition \ref{BP3}   has   the tensor product structure. It would be interesting to prove    an analog of Proposition~\ref{BP3} where   	the set $\xi(m)$ of points is    given by  a tensor product of subsets of $\Og_i$, $i=1,\cdots,s$ and the power of the log factor in the estimate \eqref{B10}  is independent of $s$.  An affirmative answer to this question would   yield a significant reduction  of the log factor  in  the estimate \eqref{A2.4} of  Theorem \ref{AT2}, as can be seen from the following simple  lemma.
  
  \begin{Lemma} \label{lem3.2}
 Let  $X_N\subset \C(\Og)$ be an 
  	$N$-dimensional  subspace satisfying 
  	\be\label{A3.11}
  	\|f\|_\infty \le  B N^{1/p}\|f\|_{L_p(\Og, \mu)}, \quad \forall f\in X_N,
  	\ee
  for some $1\leq p<\infty$, and  constant $B\ge 1$.  	  	Assume that $1\in X_N$ and   there exists a  positive constant $\alpha$ such that  for  each integer $s\ge 2$,  
  there exists a  finite  subset  $\Ld=\Ld_1\times\cdots\times  \Ld_s\subset  \Og^s$ such that  
   $
 |\Ld|= |\Ld_1|\cdots |\Ld_s| \le C(p,s,\al) (B^pN)^s \left(\log_2(2B  N)\right)^\alpha$
   and 
   	\begin{align*}
   \frac{1}{2}\|F\|_{L_p(\Og^s, \mu^s)}^p \le \frac{1}{|\Ld|}\sum_{\og\in\Ld} |F(\og)|^p \le \frac{3}{2}\|F\|_{L_p(\Og^s, \mu^s)}^p,\  \ \forall F\in\bX_{\bN},
   \end{align*}
   where 
   $
   \bX_{\bN} := \sp\{ f_1(\bx^1)\times\cdots\times f_s(\bx^s)\,:\, f_i\in X_N, i=1,\dots,s\}.
   $
    Then for  any $\delta\in (0, 1)$, there exist 
  	$$
  	m \le C(p,\delta,\al) B^{p}N(\log_2 (2BN))^\delta 
  	$$
  	points $\xi^j\in \Omega$, $j=1,\dots,m$,   such that for any $f\in X_N$
  	we have
  	$$
  	\frac12\|f\|_p^p \le \frac{1}{m}\sum_{j=1}^m |f(\xi^j)|^p \le \frac 32\|f\|_p^p.
  	$$
  	\end{Lemma}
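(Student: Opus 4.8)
The idea is a bootstrapping/tensorization argument: from a good discretization of the $s$-fold tensor power $\bX_{\bN}$ (with a log factor whose power $\alpha$ is independent of $s$) one extracts a good discretization of $X_N$ itself, and then one optimizes over $s$. First I would set $N_i=N$, $B_i=B$ for all $i$, so that by hypothesis there is a product set $\Ld=\Ld_1\times\cdots\times\Ld_s\subset\Og^s$ with $|\Ld|\le C(p,s,\al)(B^pN)^s(\log_2(2BN))^\al$ discretizing the $L_p(\Og^s,\mu^s)$ norm on $\bX_{\bN}$ up to the constants $1/2$ and $3/2$. The key observation is that for $f\in X_N$ the function $F(\bx^1,\dots,\bx^s):=f(\bx^1)\cdots f(\bx^s)$ lies in $\bX_{\bN}$, and by Fubini $\|F\|_{L_p(\Og^s,\mu^s)}^p=\|f\|_{L_p(\Og,\mu)}^{ps}$, while $\frac{1}{|\Ld|}\sum_{\og\in\Ld}|F(\og)|^p=\prod_{i=1}^s\Big(\frac{1}{|\Ld_i|}\sum_{\xi\in\Ld_i}|f(\xi)|^p\Big)$. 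Hence, writing $A_i:=\frac{1}{|\Ld_i|}\sum_{\xi\in\Ld_i}|f(\xi)|^p$, the hypothesis gives $\frac12\|f\|_p^{ps}\le\prod_{i=1}^sA_i\le\frac32\|f\|_p^{ps}$ for every $f\in X_N$.

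Taking the product bound by itself is not yet enough, since one product being close to $\|f\|_p^{ps}$ does not force each factor to be close to $\|f\|_p^p$. To fix this I would also apply the same reasoning to the function $g\equiv 1\in X_N$ (here the assumption $1\in X_N$ is used) and, more usefully, exploit the two-sided bound for all $f\in X_N$ together with an averaging/pigeonhole step: among the $s$ candidate node sets $\Ld_1,\dots,\Ld_s$, at least one must give a two-sided discretization with reasonable constants. Concretely, taking geometric means, $\Big(\prod_{i=1}^sA_i\Big)^{1/s}$ lies in $[(1/2)^{1/s}\|f\|_p^p,(3/2)^{1/s}\|f\|_p^p]$; since this holds uniformly in $f$, and since each $A_i$ is a fixed quadratic-type functional of $f$, a compactness argument on $X_N^p$ shows that the worst ratio $\sup_{f}A_i/\|f\|_p^p$ and best ratio $\inf_f A_i/\|f\|_p^p$ over a single well-chosen index $i_0$ must already be sandwiched: one picks $i_0$ minimizing $\sup_f A_i/\|f\|_p^p$ over $i$, observes the product of these $s$ suprema is at least the supremum of the product $\ge\frac12$ (wait — rather, one uses that if every $A_i$ could be simultaneously small or large the product would violate the bound), and deduces $\frac{1}{4}\|f\|_p^p\le A_{i_0}\le 4\|f\|_p^p$ for all $f$, say. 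Rescaling the measure on $\Ld_{i_0}$ by a single constant then upgrades $[1/4,4]$ to $[1/2,3/2]$, giving $m=|\Ld_{i_0}|$ points.

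Finally I would estimate $m=|\Ld_{i_0}|$. Since $\prod_{i=1}^s|\Ld_i|=|\Ld|\le C(p,s,\al)(B^pN)^s(\log_2(2BN))^\al$, by pigeonhole the smallest factor satisfies $\min_i|\Ld_i|\le\big(C(p,s,\al)(B^pN)^s(\log_2(2BN))^\al\big)^{1/s}=C(p,s,\al)^{1/s}B^pN(\log_2(2BN))^{\al/s}$; if the index $i_0$ we are forced to pick is not the one of smallest cardinality, one re-runs the argument symmetrically (or notes the two suprema we control can be made to correspond to a controlled-size $\Ld_{i_0}$ after relabeling, since the roles of the coordinates are interchangeable in $\bX_{\bN}$). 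Choosing $s=s(\delta,\al):=\lceil\al/\delta\rceil$ makes the log exponent $\al/s\le\delta$, and $C(p,s,\al)^{1/s}$ becomes a constant depending only on $p,\delta,\al$, yielding $m\le C(p,\delta,\al)B^pN(\log_2(2BN))^\delta$, as claimed.

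\medskip\noindent\textbf{Main obstacle.} The delicate point is the pigeonhole/compactness step that passes from the product (or geometric-mean) estimate $\prod_i A_i\approx\|f\|_p^{ps}$, valid uniformly over $f\in X_N$, to a genuine two-sided estimate $A_{i_0}\approx\|f\|_p^p$ for a single index $i_0$ and with universal constants. One has to rule out the scenario in which different functions $f$ are "bad" for different coordinates in a way that only the product behaves well; the resolution is that each functional $f\mapsto A_i/\|f\|_p^p$ attains its sup and inf on the compact set $X_N^p$, these extremal ratios are honest numbers $a_i\le 1\le b_i$ with $\prod a_i\ge 1/2$ and $\prod b_i\le 3/2$ forcing every $a_i\ge(1/2)^{?}$ and $b_i\le(3/2)^{?}$ type bounds — actually one gets $a_i\ge 1/2$ and $b_i\le 3/2$ automatically from $a_i\le 1\le b_i$ and the product constraints, hence \emph{every} $i$, in particular the one of minimal cardinality, already works with constants $[1/2,3/2]$, and no rescaling is even needed. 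Making this last implication airtight (that $a_i\le 1$, $\prod_i a_i\ge 1/2$, and the analogous upper statement together give each individual factor in $[1/2,3/2]$, using $s\ge 2$ and $a_j\le 1$) is the crux, and then the cardinality estimate and the choice of $s$ are routine.
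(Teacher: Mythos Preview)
Your overall strategy---tensorize, pick the factor $\Lambda_{i_0}$ of smallest cardinality, then choose $s\approx\alpha/\delta$---is exactly the one in the paper, and your cardinality estimate $\min_i|\Lambda_i|\le C(p,s,\alpha)^{1/s}B^pN(\log_2(2BN))^{\alpha/s}$ is correct. The gap is in the test function you feed into the discretization hypothesis.

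With $F(\bx^1,\dots,\bx^s)=f(\bx^1)\cdots f(\bx^s)$ you only learn
\[
\tfrac12\|f\|_p^{ps}\le\prod_{i=1}^s A_i(f)\le\tfrac32\|f\|_p^{ps},
\]
and the step where you conclude ``$\prod_i a_i\ge 1/2$ and $\prod_i b_i\le 3/2$'' does not follow. The hypothesis bounds $\inf_f\prod_i\bigl(A_i(f)/\|f\|_p^p\bigr)$ and $\sup_f\prod_i\bigl(A_i(f)/\|f\|_p^p\bigr)$, but $\inf_f\prod_i g_i(f)$ can be much larger than $\prod_i\inf_f g_i(f)$ (and dually for the sup), so no bound on $\prod_i a_i$ or $\prod_i b_i$ is obtained. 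The scenario you worry about---different $f$'s bad on different coordinates while the product stays tame---is precisely what this argument fails to exclude.

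The fix, which is what the paper does, is to use a different test function: take
\[
F(\bx^1,\dots,\bx^s):=f(\bx^1),
\]
i.e.\ $f$ in the first slot and the constant $1$ in the remaining $s-1$ slots. This is where the assumption $1\in X_N$ is actually used: it guarantees $F\in\bX_{\bN}$. Then $\|F\|_{L_p(\Omega^s,\mu^s)}^p=\|f\|_p^p$, and the discrete average over $\Lambda=\Lambda_1\times\cdots\times\Lambda_s$ collapses exactly to $\frac{1}{|\Lambda_1|}\sum_{\xi\in\Lambda_1}|f(\xi)|^p$. The hypothesis therefore gives directly
\[
\tfrac12\|f\|_p^p\le \frac{1}{|\Lambda_1|}\sum_{\xi\in\Lambda_1}|f(\xi)|^p\le\tfrac32\|f\|_p^p,
\]
and by symmetry the same holds for every coordinate, in particular for the one of minimal $|\Lambda_i|$. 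Your size bound and choice of $s$ then finish the proof; no compactness, pigeonhole on ratios, or rescaling is needed.
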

\begin{proof} Let $s\ge 2$ be an integer, and let $\Ld=\Ld_1\times\cdots\times  \Ld_s$ be a finite subset of $\Og^s$ with the stated properties in Lemma \ref{lem3.2}. 
Without loss of generality, we may assume that $|\Ld_1|=\min_{1\leq i\leq s} |\Ld_i|$. Then 
\[ |\Ld_1|\leq C(p, s,\al)^{1/s}   (B^p N) (\log_2 (2BN))^{\al/s}.\]
Let  $f\in X_N$, and define  
$$F(\bx^1,\cdots, \bx^s):=f(\bx^1),\   \   \    \bx:=(\bx^1,\cdots, \bx^s) \in\Og^s.$$
 Clearly, 
$\|F\|_{L_p(\Og^s)}^p=\|f\|_{L_p(\Og)}^p $
and 
$$ \frac{1}{|\Ld|}\sum_{\og\in\Ld} |F(\og)|^p=\frac 1 {|\Ld_1|} \sum_{\og_1\in\Ld_1} |f(\og_1)|^p.$$	
Since $1\in X_N$, we have $F\in\bX_{\bN}$. It then follows that 
\[ \frac 12 \|f\|_p^p \leq \frac 1 {|\Ld_1|} \sum_{\og_1\in\Ld_1} |f(\og_1)|^p \leq \frac 32 \|f\|_p^p.\]
	\end{proof}

\section{Sampling recovery}
\label{sr}

We first recall the setting 
 of the optimal recovery. For a fixed integer $m$ and a set of points  $\xi:=\{\xi^j\}_{j=1}^m\subset \Omega$, let $\Phi$ be a linear operator from $\bbC^m$ into $L_p(\Omega,\mu)$.
For a class $\bF\subset L_p(\Og, \mu)$ (usually, centrally symmetric and compact subset of $L_p(\Omega,\mu)$), define
$$
\varrho_m(\bF,L_p) := \inf_{\text{linear}\, \Phi; \,\xi} \sup_{f\in \bF} \|f-\Phi(f(\xi^1),\dots,f(\xi^m))\|_p.
$$
The above described recovery procedure is a linear procedure. 
The following modification of the above recovery procedure is also of interest. We now allow any mapping $\Phi : \bbC^m \to X_N \subset L_p(\Omega,\mu)$, where $X_N$ is a linear subspace of dimension $N\le m$,  and define
$$
\varrho_m^*(\bF,L_p) := \inf_{\Phi; \xi; X_N, N\le m} \sup_{f\in \bF}\|f-\Phi(f(\xi^1),\dots,f(\xi^m))\|_p.
$$

In both of the above cases we build an approximant, which comes from a linear subspace of dimension at most $m$. 
It is natural to compare the  quantities $\varrho_m(\bF,L_p)$ and $\varrho_m^*(\bF,L_p)$ with the 
Kolmogorov widths. Let $\bF\subset L_p$ be a centrally symmetric compact. The quantities  
$$
d_n (\bF, L_p) := \operatornamewithlimits{inf}_{\{u_i\}_{i=1}^n\subset L_p}
\sup_{f\in \bF}
\operatornamewithlimits{inf}_{c_i} \left \| f - \sum_{i=1}^{n}
c_i u_i \right\|_p, \quad n = 1, 2, \dots,
$$
are called the {\it Kolmogorov widths} of $\bF$ in $L_p$. In the definition of
the Kolmogorov widths we take the element of best
approximation of $f\in \bF$ as an approximating element
from $U := \sp \{u_i \}_{i=1}^n$.  This means
that in general (i.e. if $p\neq 2$) this method of approximation is not linear.

We have the following obvious inequalities
\be\label{I1}
d_m (\bF, L_p)\le \varrho_m^*(\bF,L_p)\le \varrho_m(\bF,L_p).
\ee

The main result of the paper \cite{VT183} is the following general inequality.
\begin{Theorem}\label{srT1}  There exist two positive absolute constants $c$ and $C$ such that for any   compact subset $\Omega$  of $\R^d$, any probability measure $\mu$ on it, and any compact subset $\bF$ of $\C(\Omega)$ we have
$$
\ro_{cn}(\bF,L_2(\Omega,\mu)) \le Cd_n(\bF,L_\infty).
$$
\end{Theorem}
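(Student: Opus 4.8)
The plan is to combine a discretization result for the $L_2$ norm on the finite-dimensional subspace realizing the Kolmogorov width with the standard least-squares recovery operator built on the sampled inner product. First I would fix $n$ and let $U_n = \sp\{u_i\}_{i=1}^n$ be (close to) an optimal subspace for $d_n(\bF, L_\infty)$, so that every $f \in \bF$ satisfies $\inf_{g \in U_n}\|f-g\|_\infty \le 2 d_n(\bF,L_\infty)$, say. The key point is that $U_n$, viewed as a subspace of $\C(\Omega)$, automatically satisfies a Nikol'skii-type inequality $\|g\|_\infty \le B n^{1/2}\|g\|_2$ for all $g \in U_n$ — in fact with an absolute constant $B$ — because on an $n$-dimensional space the $L_\infty$ and $L_2$ norms are comparable with a factor of at most order $n^{1/2}$ (this is where the passage from $L_\infty$ to $L_2$ costs us the linear dimension blow-up that forces the $cn$ sample points). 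Actually, here one should invoke the sharper known route: $U_n$ satisfies Condition $\cN(2,\infty,B)$ with absolute $B$, hence by Theorem \ref{AT2} (with $q=2$) — or, more economically, by the classical $L_2$-discretization result for subspaces satisfying the $(2,\infty)$ Nikol'skii inequality — there is a set $\xi = \{\xi^j\}_{j=1}^m$ with $m \le C n (\log 2n)^3$ such that $\frac12\|g\|_2^2 \le \frac1m\sum_{j=1}^m |g(\xi^j)|^2 \le \frac32\|g\|_2^2$ for all $g \in U_n$.

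Next I would define the recovery map $\Phi$ as the (weighted) least-squares projection: given the data $(f(\xi^1),\dots,f(\xi^m))$, let $\Phi(f) \in U_n$ be the minimizer of $\sum_{j=1}^m |f(\xi^j) - g(\xi^j)|^2$ over $g \in U_n$; this is a linear operator into the $n$-dimensional space $U_n$, so it is admissible for $\varrho_m^*$. For the error analysis, fix $f \in \bF$ and let $g^* \in U_n$ be a near-best $L_\infty$-approximant. Write $f - \Phi(f) = (f - g^*) + (g^* - \Phi(f))$. The first term has $\|f-g^*\|_\infty \le 2 d_n(\bF,L_\infty)$, hence also $\|f-g^*\|_2 \le 2 d_n(\bF,L_\infty)$ since $\mu$ is a probability measure. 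For the second term, $h := g^* - \Phi(f) \in U_n$, so by the lower discretization inequality $\|h\|_2^2 \le \frac{2}{m}\sum_j |h(\xi^j)|^2$; and since $\Phi(f)$ is the least-squares fit, $\sum_j |g^*(\xi^j) - \Phi(f)(\xi^j)|^2 \le \sum_j |f(\xi^j) - \Phi(f)(\xi^j)|^2 + \sum_j|f(\xi^j)-g^*(\xi^j)|^2$, and by optimality of $\Phi(f)$ the first sum on the right is at most $\sum_j |f(\xi^j)-g^*(\xi^j)|^2 \le m\|f-g^*\|_\infty^2$. Chaining these gives $\|h\|_2 \le C' \|f-g^*\|_\infty \le C'' d_n(\bF,L_\infty)$, and the triangle inequality yields $\|f-\Phi(f)\|_2 \le C d_n(\bF,L_\infty)$ uniformly over $f \in \bF$.

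Finally, renaming $m = cn$: since $m \le Cn(\log 2n)^3$ is \emph{larger} than $cn$ for small $c$, I would instead run the argument with the number of points being exactly of the form $cn$, which requires the sharper discretization statement without the logarithmic loss — this is the known result that a subspace satisfying the $(2,\infty)$ Nikol'skii inequality with absolute constant admits $L_2$-discretization with $O(n)$ points (due to the random-matrix / subsampling technique, e.g. via the results underlying Lemma \ref{AL1} together with Batson–Spielman–Srivastava-type sparsification); this is exactly what is used in \cite{VT183}. The main obstacle, and the only nontrivial input, is precisely this: obtaining an \emph{unconditional} $L_2$-Marcinkiewicz discretization with a number of points linear in $n$ (no extra logarithms) for \emph{any} $n$-dimensional subspace of $\C(\Omega)$. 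The Nikol'skii inequality with absolute constant for such subspaces is elementary (it follows from comparing norms on a finite-dimensional space, using that $\mu$ is a probability measure and a duality/reproducing-kernel argument), so the crux is the linear-in-$n$ sampling bound; everything else — the least-squares operator and the three-term error split above — is routine.
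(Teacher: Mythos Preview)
Your error analysis --- the split $f-\Phi(f)=(f-g^*)+(g^*-\Phi(f))$ together with the lower discretization bound applied to $g^*-\Phi(f)\in U_n$ --- is exactly the content of Theorem~\ref{srT2} and is correct. The genuine gap is your claim that an \emph{arbitrary} $n$-dimensional subspace $U_n\subset\C(\Omega)$ satisfies the Nikol'skii inequality $\|g\|_\infty\le Bn^{1/2}\|g\|_2$ with an \emph{absolute} constant $B$. This is false: already for $n=1$, a continuous function concentrated on a set of $\mu$-measure $\ep$ has $\|g\|_\infty/\|g\|_2$ of order $\ep^{-1/2}$, which can be made arbitrarily large. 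There is no universal bound coming from ``comparing norms on a finite-dimensional space''; the standard finite-dimensional comparisons relate two norms once the space is fixed, but the ratio between $L_\infty(\Omega)$ and $L_2(\mu)$ on $U_n$ depends on how $U_n$ sits inside $\C(\Omega)$ and on $\mu$, not just on $n$. Consequently your appeal to Theorem~\ref{AT2} (or any equal-weight discretization under a Nikol'skii hypothesis) is unjustified, and equal-weight discretization with $O(n)$ points is not available for a general $U_n$.

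The route actually taken in \cite{VT183}, as recorded in this paper, bypasses the Nikol'skii inequality entirely. The key input is the \emph{weighted} discretization Theorem~\ref{thm-4-3} (from \cite{LT}): for \emph{any} $N$-dimensional subspace of $L_2(\Omega,\mu)$ there exist $m\le C_0'N$ points $\xi^j$ and positive weights $w_j$ with $c_0\|f\|_2^2\le\sum_j w_j|f(\xi^j)|^2\le C_0\|f\|_2^2$. This result is unconditional and is indeed proved by Batson--Spielman--Srivastava-type sparsification, but the output is a weighted quadrature, not an equal-weight one. One then runs your least-squares argument with the \emph{weighted} discrete inner product; the operator $\ell 2\bw(\xi,U_n)$ is still linear and hence admissible for $\ro_m$, and assumption~{\bf A2} ($\sum_j w_j\le C_0$) is ensured by adjoining the constant function $1$ to $U_n$. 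So the fix is: drop the Nikol'skii claim, replace your equal-weight discretization by Theorem~\ref{thm-4-3}, and use weighted least squares in the recovery step.
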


We now formulate a conditional result from \cite{VT183}, which was used for the proof of Theorem \ref{srT1}. Let $X_N$ be an $N$-dimensional subspace of the space of continuous functions $\C(\Omega)$. For a fixed $m$ and a set of $m$  points  $\xi:=\{\xi^\nu\}_{\nu=1}^m\subset \Omega$,  we associate  a function $f\in \C(\Omega)$ with a vector
$$
S(f,\xi) := (f(\xi^1),\dots,f(\xi^m)) \in \bbC^m.
$$
Define
$$
\|S(f,\xi)\|_p:= \left(\frac{1}{m}\sum_{\nu=1}^m |f(\xi^\nu)|^p\right)^{1/p},\quad 1\le p<\infty,
$$
and 
$$
\|S(f,\xi)\|_\infty := \max_{\nu}|f(\xi^\nu)|.
$$
For a positive weight $\bw:=(w_1,\dots,w_m)\in \R_+^m$,  consider the following norm
$$
\|S(f,\xi)\|_{p,\bw}:= \left(\sum_{\nu=1}^m w_\nu |f(\xi^\nu)|^p\right)^{1/p},\quad 1\le p<\infty.
$$
Define the best approximation of $f\in L_p(\Omega,\mu)$, $1\le p\le \infty$ by elements of $X_N$ as follows
$$
d(f,X_N)_p := \inf_{u\in X_N} \|f-u\|_p.
$$
It is well known that there exists an element, which we denote by $P_{X_N,p}(f)\in X_N$, such that
$$
\|f-P_{X_N,p}(f)\|_p = d(f,X_N)_p.
$$
The operator $P_{X_N,p}: L_p(\Omega,\mu) \to X_N$ is called the Chebyshev projection. 

 Theorem \ref{srT2} below was proved in \cite{VT183} under the following assumptions.

{\bf A1. Discretization.} Let $1\le p\le \infty$. Suppose that $\xi:=\{\xi^j\}_{j=1}^m\subset \Omega$ is such that for any 
$u\in X_N$ in the case $1\le p<\infty$ we have
$$
C_1\|u\|_p \le \|S(u,\xi)\|_{p,\bw}  
$$
and in the case $p=\infty$ we have
$$
C_1\|u\|_\infty \le \|S(u,\xi)\|_{\infty}  
$$
with a positive constant $C_1$ which may depend on $d$ and $p$. 

{\bf A2. Weights.} Suppose that there is a positive constant $C_2=C_2(d,p)$ such that 
$\sum_{\nu=1}^m w_\nu \le C_2$.

Consider the following well known recovery operator (algorithm) 
$$
\ell p\bw(\xi)(f) := \ell p\bw(\xi,X_N)(f):=\text{arg}\min_{u\in X_N} \|S(f-u,\xi)\|_{p,\bw}.
$$
Note that the above algorithm $\ell p\bw(\xi)$ only uses the function values $f(\xi^\nu)$, $\nu=1,\dots,m$. In the case $p=2$ it is a linear algorithm -- orthogonal projection with respect 
to the norm $\|\cdot\|_{2,\bw}$. Therefore, in the case $p=2$ approximation error by the algorithm $\ell 2\bw(\xi)$ gives an upper bound for the recovery characteristic $\ro_m(\cdot, L_2)$. In the case $p\neq 2$ approximation error by the algorithm $\ell p\bw(\xi)$ gives an upper bound for the recovery characteristic $\ro_m^*(\cdot, L_p)$.

\begin{Theorem}\label{srT2} Under assumptions {\bf A1} and {\bf A2} for any $f\in \C(\Omega)$ we have
for $1\le p<\infty$
$$
\|f-\ell p\bw(\xi,X_N)(f)\|_p \le (2C_1^{-1}C_2^{1/p} +1)d(f, X_N)_\infty.
$$
Under assumption {\bf A1} for any $f\in \C(\Omega)$ we have
$$
\|f-\ell \infty(\xi,X_N)(f)\|_\infty \le (2C_1^{-1}  +1)d(f, X_N)_\infty.
$$
\end{Theorem}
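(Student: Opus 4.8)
The plan is to combine the triangle inequality with the discretization inequality in \textbf{A1}, the defining minimality property of the operator $\ell p\bw(\xi,X_N)$, and the weight bound \textbf{A2}. Fix $f\in\C(\Omega)$ and let $u^\ast:=P_{X_N,\infty}(f)$, so that $\|f-u^\ast\|_\infty=d(f,X_N)_\infty$; such an element exists since $X_N$ is finite-dimensional. Put $g:=\ell p\bw(\xi,X_N)(f)$. Because $u^\ast-g\in X_N$, the error splits as $\|f-g\|_p\le\|f-u^\ast\|_p+\|u^\ast-g\|_p$, and since $\mu$ is a probability measure the first summand is at most $\|f-u^\ast\|_\infty=d(f,X_N)_\infty$.

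To control the second summand I would apply \textbf{A1} to $u^\ast-g\in X_N$, which gives $C_1\|u^\ast-g\|_p\le\|S(u^\ast-g,\xi)\|_{p,\bw}$. The discrete quantity $\|S(\cdot,\xi)\|_{p,\bw}$ satisfies the triangle inequality, so $\|S(u^\ast-g,\xi)\|_{p,\bw}\le\|S(f-u^\ast,\xi)\|_{p,\bw}+\|S(f-g,\xi)\|_{p,\bw}$. By the definition of $g$ as a minimizer of $u\mapsto\|S(f-u,\xi)\|_{p,\bw}$ over $X_N$ and the fact that $u^\ast\in X_N$, the last summand is $\le\|S(f-u^\ast,\xi)\|_{p,\bw}$, whence $\|S(u^\ast-g,\xi)\|_{p,\bw}\le 2\|S(f-u^\ast,\xi)\|_{p,\bw}$. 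Finally \textbf{A2} bounds the right side: $\|S(f-u^\ast,\xi)\|_{p,\bw}^p=\sum_{\nu}w_\nu|f(\xi^\nu)-u^\ast(\xi^\nu)|^p\le\|f-u^\ast\|_\infty^p\sum_\nu w_\nu\le C_2\,d(f,X_N)_\infty^p$. Combining the displays yields $\|u^\ast-g\|_p\le 2C_1^{-1}C_2^{1/p}d(f,X_N)_\infty$, and therefore $\|f-g\|_p\le(2C_1^{-1}C_2^{1/p}+1)d(f,X_N)_\infty$.

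For $p=\infty$ I would run the identical argument with $\|S(\cdot,\xi)\|_\infty=\max_\nu|\cdot(\xi^\nu)|$ in place of $\|S(\cdot,\xi)\|_{p,\bw}$; here the weights play no role, so the estimate on $\|S(f-u^\ast,\xi)\|_\infty$ reduces to $\le\|f-u^\ast\|_\infty=d(f,X_N)_\infty$, giving $\|f-g\|_\infty\le(2C_1^{-1}+1)d(f,X_N)_\infty$. I do not expect any serious obstacle: the proof is bookkeeping with the triangle inequality and the minimality of $g$. The only points needing a word of care are the existence of the best $L_\infty$ approximant $u^\ast$ (standard, from finite-dimensionality of $X_N$), the existence of the minimizer $g$ (by \textbf{A1} the map $u\mapsto\|S(u,\xi)\|_{p,\bw}$ is a genuine norm on the finite-dimensional space $X_N$, so the minimization of a continuous coercive function is attained, and the bound holds for any such minimizer), and the elementary fact that $\|\cdot\|_p\le\|\cdot\|_\infty$, which uses $\mu(\Omega)=1$.
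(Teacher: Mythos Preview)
Your argument is correct and is exactly the natural proof: split $f-g$ through the best uniform approximant $u^\ast$, use \textbf{A1} on $u^\ast-g\in X_N$, use the minimality of $g$ together with the triangle inequality for $\|S(\cdot,\xi)\|_{p,\bw}$, and bound the remaining discrete norm via \textbf{A2} (respectively trivially when $p=\infty$). Note that the present paper does not supply its own proof of this theorem; it is quoted from \cite{VT183}, and the argument there is precisely the one you wrote.
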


Theorem \ref{srT2} is devoted to recovery by weighted least squares algorithms $\ell p \bw(\xi)$.
 It requires a discretization theorem
with positive  weights $\bw $. There is such a theorem   from \cite{LT}
for  $p=2$ and a general  subspace $X_N$ of $L_2(\Og, \mu)$, which we formulate as follows.

\begin{Theorem} \label{thm-4-3} There exist three absolute positive  constants $C_0, c_0, C_0'$ such that for every $N$-dimensional subspace $X_N$ of $L_2(\Og, \mu)$, there exist $m\leq C_0' N$ points $\xi^1,\cdots, \xi^m\in\Og$ and positive weights $w_1,\cdots, w_m$ such that 
	\begin{align}\label{weightedMZ}
	c_0	\|f\|_{2}^2 \leq   \sum_{j=1}^mw_j |f(\xi^j)|^2\leq  C_0 \|f\|_2^2,\   \  \forall f\in X_N.
	\end{align}	
\end{Theorem}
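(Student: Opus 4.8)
The plan is to recast the weighted Marcinkiewicz--Zygmund inequality \eqref{weightedMZ} as a statement about sparsifying a sum of rank-one positive semidefinite matrices, and then to combine a finite-reduction step with the Batson--Spielman--Srivastava sparsification theorem. Fix an orthonormal basis $u_1,\dots,u_N$ of $X_N$ in $L_2(\Og,\mu)$ and, for $\xi\in\Og$, put $v(\xi):=(u_1(\xi),\dots,u_N(\xi))$ and $V(\xi):=v(\xi)v(\xi)^*$ (conjugate transpose), so that each $V(\xi)$ is a rank-one Hermitian positive semidefinite $N\times N$ matrix and, by orthonormality, $\int_\Og V(\xi)\,d\mu(\xi)=I_N$. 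For $f=\sum_i c_iu_i\in X_N$ one has $|f(\xi)|^2=\langle V(\xi)\bar c,\bar c\rangle$ and $\|f\|_2^2=\langle\bar c,\bar c\rangle$, so for any points $\xi^1,\dots,\xi^m$ and weights $w_1,\dots,w_m>0$ the inequality \eqref{weightedMZ} with constants $c_0\le C_0$ is equivalent to the Loewner sandwich $c_0I_N\preceq\sum_{j=1}^m w_jV(\xi^j)\preceq C_0I_N$. Thus it suffices to produce $m\le C_0'N$ points $\xi^j$ and positive weights $w_j$ with $\sum_j w_jV(\xi^j)$ comparable from both sides to $I_N$.

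First I would reduce to a finite point set while preserving the identity $\int V\,d\mu=I_N$ \emph{exactly}. The at most $N^2$ real coordinate functions of the Hermitian-matrix-valued map $\xi\mapsto V(\xi)$ lie in $L_1(\mu)$ (by Cauchy--Schwarz, since $u_i\in L_2(\mu)$), so the Tchakaloff/cubature theorem for $L_1$ functions yields finitely many points $\eta_1,\dots,\eta_M\in\Og$ with $M\le N^2+1$ and weights $\rho_1,\dots,\rho_M>0$ such that $\sum_{l=1}^M\rho_lV(\eta_l)=\int_\Og V\,d\mu=I_N$. Equivalently the \emph{finite} system $\tilde v_l:=\sqrt{\rho_l}\,v(\eta_l)$, $l=1,\dots,M$, satisfies the isotropy relation $\sum_{l=1}^M\tilde v_l\tilde v_l^*=I_N$. (If $X_N\subset\C(\Og)$ with $\Og$ compact, one may instead invoke Carath\'eodory's theorem in the space of Hermitian matrices.)

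Next I apply the Batson--Spielman--Srivastava sparsification theorem to this finite isotropic system; its interlacing-barrier-function proof is insensitive to the scalar field, so the complex case is covered as well. For a fixed absolute $d>1$ it produces a subset $S\subseteq\{1,\dots,M\}$ with $|S|\le\lceil dN\rceil$ and weights $s_l>0$ ($l\in S$) such that $I_N\preceq\sum_{l\in S}s_l\tilde v_l\tilde v_l^*\preceq\left(\frac{\sqrt d+1}{\sqrt d-1}\right)^2 I_N$. Putting $m:=|S|\le\lceil dN\rceil\le C_0'N$ with $C_0':=d+1$, relabelling $\{\eta_l:l\in S\}$ as $\xi^1,\dots,\xi^m$, and setting $w_j:=s_{l_j}\rho_{l_j}>0$, we get $\sum_{j=1}^m w_jV(\xi^j)=\sum_{l\in S}s_l\tilde v_l\tilde v_l^*$, so $I_N\preceq\sum_j w_jV(\xi^j)\preceq\kappa I_N$ with $\kappa:=\left(\frac{\sqrt d+1}{\sqrt d-1}\right)^2$; that is, \eqref{weightedMZ} holds with $c_0=1$, $C_0=\kappa$, and $m\le C_0'N$ (rescaling all $w_j$ by $\kappa^{-1/2}$ symmetrizes the constants if desired).

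The one genuinely delicate point is the passage from the continuum $\Og$ to a finite collection of points \emph{without destroying the exact isotropy} $\int V\,d\mu=I_N$; Tchakaloff's theorem accomplishes exactly this, with a crude bound $M\le N^2+1$, after which the Batson--Spielman--Srivastava theorem is used as a black box to reduce the cardinality to $O(N)$ while retaining two-sided control with absolute constants. The matrix reformulation and the weight bookkeeping are routine.
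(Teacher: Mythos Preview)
The paper does not prove this theorem; it is quoted from \cite{LT} and used as a black box in Section~\ref{sr}, so there is no in-paper argument to compare against.

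That said, your sketch is correct and is one of the standard routes to this result. The reformulation of \eqref{weightedMZ} as the Loewner sandwich $c_0 I_N \preceq \sum_j w_j v(\xi^j)v(\xi^j)^* \preceq C_0 I_N$ is exact, and the two-step plan---pass from the integral identity $\int_\Omega V\,d\mu = I_N$ to a finite isotropic system by a Tchakaloff/Carath\'eodory cubature argument, then sparsify to $O(N)$ terms with Batson--Spielman--Srivastava---goes through as written. The proof in \cite{LT} is in the same spirit (a finite reduction followed by a spectral sparsification result of Marcus--Spielman--Srivastava type), so your approach is not materially different from the intended one; using BSS directly is arguably cleaner here since it already outputs positive weights and a two-sided bound. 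The only technical point to watch is the Tchakaloff step when the $u_i$ are merely $L_2$ functions: you must fix Borel-measurable representatives of the basis at the outset so that $\xi\mapsto V(\xi)$ is a genuine pointwise-defined map and the evaluations $f(\xi^j)$ make sense, and then appeal to a version of Tchakaloff valid for measurable $L_1$ integrands (e.g.\ the Bayer--Teichmann extension) rather than the classical continuous-function version. With that caveat the argument is complete.
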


For a fixed integer $m\ge 1$ and a class $\bF\subset \C(\Omega)$ (usually, a centrally symmetric compact in $\C(\Omega)$), we define 
$$
\varrho_m^{wls}(\bF,L_2) := \inf_{\xi,\bw,  X_N, N\leq m}\sup_{f\in \bF} \|f-\ell 2 \bw(\xi,X_N)(f)\|_2,
$$
where the infimum  is  taken over  all  $N$-dimensional subspaces $X_N\subset L_2(\Og, \mu)$ with $N\leq m$, all collections $\xi:=\{\xi^j\}_{j=1}^m\subset \Omega$ of $m$ 
points in $\Og$,  and all positive weights $\bw=(w_1,\cdots, w_m) \in\mathbb{R}_+^m$.

Note that if we assume in addition that $1\in X_N$ in Theorem \ref{thm-4-3},  then the weights $w_j$ in \eqref{weightedMZ} satisfy 
$\sum_{j=1}^m w_j \leq C_0$. As a result, Theorem \ref{thm-4-3} combined with Theorem \ref{srT2} gives an analog of the following  Theorem \ref{IT2} from \cite{VT183}. 
\begin{Theorem}\label{IT2} There exist two positive absolute constants $c$ and $C$ such that for any   compact subset $\Omega$  of $\R^d$, any probability measure $\mu$ on it, and any compact subset $\bF$ of $\C(\Omega)$ we have
	$$
	\ro_{cn}^{wls}(\bF,L_2(\Omega,\mu)) \le Cd_n(\bF,L_\infty).
	$$
\end{Theorem}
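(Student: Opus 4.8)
The plan is to combine the weighted discretization theorem, Theorem~\ref{thm-4-3}, with the recovery bound of Theorem~\ref{srT2}, along the lines indicated just before the statement. Fix an integer $n\ge1$ and a compact set $\bF\subset\C(\Og)$. First choose a subspace $U=\sp\{u_i\}_{i=1}^n\subset\C(\Og)$ that is nearly optimal for the Kolmogorov width, so that $\sup_{f\in\bF}d(f,U)_\infty\le 2d_n(\bF,L_\infty)$ (if $d_n(\bF,L_\infty)=0$ one uses $\le\varepsilon$ and lets $\varepsilon\to0$ at the end). Put $X_N:=U+\sp\{1\}$; then $N:=\dim X_N\le n+1$, $1\in X_N$, $X_N\subset\C(\Og)$, and $d(f,X_N)_\infty\le d(f,U)_\infty$ for every $f$, so the near-optimality is preserved.

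Next apply Theorem~\ref{thm-4-3} to $X_N$: there are $m\le C_0'N$ points $\xi=\{\xi^j\}_{j=1}^m\subset\Og$ and positive weights $\bw=(w_1,\dots,w_m)$ with $c_0\|u\|_2^2\le\sum_{j=1}^m w_j|u(\xi^j)|^2\le C_0\|u\|_2^2$ for all $u\in X_N$. The lower bound here is Assumption~\textbf{A1} with $p=2$ and $C_1=\sqrt{c_0}$. For Assumption~\textbf{A2}, use $1\in X_N$: the upper bound evaluated at $u\equiv1$ gives $\sum_{j=1}^m w_j\le C_0$, so \textbf{A2} holds with $C_2=C_0$. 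We may assume $C_0'\ge1$, so that $N\le m$ and $X_N$ is an admissible subspace in the definition of $\ro_m^{wls}$.

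Now Theorem~\ref{srT2} with $p=2$ yields, for every $f\in\C(\Og)$,
\[
\|f-\ell 2\bw(\xi,X_N)(f)\|_2\le\bigl(2C_1^{-1}C_2^{1/2}+1\bigr)\,d(f,X_N)_\infty=\bigl(2\sqrt{C_0/c_0}+1\bigr)\,d(f,X_N)_\infty.
\]
Taking the supremum over $f\in\bF$ and recalling the choice of $U$, the triple $(\xi,\bw,X_N)$ is an admissible competitor in the definition of $\ro_m^{wls}(\bF,L_2(\Og,\mu))$, hence
\[
\ro_m^{wls}(\bF,L_2(\Og,\mu))\le\bigl(2\sqrt{C_0/c_0}+1\bigr)\sup_{f\in\bF}d(f,X_N)_\infty\le 2\bigl(2\sqrt{C_0/c_0}+1\bigr)\,d_n(\bF,L_\infty).
\]
Since $\ro_\cdot^{wls}(\bF,L_2)$ is non-increasing in the number of sample points and $m\le C_0'(n+1)\le 2C_0'n$, this gives $\ro_{\lceil 2C_0'n\rceil}^{wls}(\bF,L_2(\Og,\mu))\le C\,d_n(\bF,L_\infty)$ with $C:=2\bigl(2\sqrt{C_0/c_0}+1\bigr)$, which is the asserted inequality with the absolute constants $c:=2C_0'$ and this $C$.

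There is no real obstacle beyond the two cited theorems; the only point that must not be overlooked is the role of the constant function. Assumption~\textbf{A2}, a bound on $\sum_j w_j$, is not provided by Theorem~\ref{thm-4-3} by itself, and the clean way to obtain it is to force $1\in X_N$ and to evaluate the weighted discretization inequality at $f\equiv1$. One should also note that enlarging the near-optimal width subspace by the constants does not increase $d(\cdot,X_N)_\infty$ and changes the dimension --- hence the number of sample points --- by at most $1$, so the final bound $m\lesssim n$ is unaffected.
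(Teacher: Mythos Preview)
Your proof is correct and follows exactly the route the paper indicates: combine Theorem~\ref{thm-4-3} with Theorem~\ref{srT2}, using the observation (explicitly noted in the paper just before Theorem~\ref{IT2}) that forcing $1\in X_N$ yields the weight bound $\sum_j w_j\le C_0$ needed for Assumption~\textbf{A2}. Your handling of the details---enlarging the near-optimal width subspace by the constants and absorbing the resulting dimension increase of at most $1$ into the constant $c$---is precisely what the paper's sketch requires.
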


 We may want  the recovery algorithm $\ell 2 \bw(\xi)$ to be the  classical least square 
algorithm, i.e. $\bw=\bw_m:=(1/m,\dots,1/m)$.
For that we need an $L_2$- discretization theorem with equal weights.
 There is such a theorem from \cite{LT}
under an extra assumption on the subspace $X_N$:

{\bf Condition E($t$).} We say that an orthonormal system $\{u_i(\bx)\}_{i=1}^N$ defined on $\Omega$ satisfies Condition E($t$) with a constant $t>0$ if for all $\bx\in \Omega$
$$
\sum_{i=1}^N |u_i(\bx)|^2 \le Nt^2.
$$

Under Condition E($t$), we have the following discretization  theorem, which was proved in  \cite{LT}:

\begin{Theorem}\label{CT2} Let  $\Omega\subset \R^d$ be a compact set with the probability measure $\mu$. Assume that $\{u_i(\bx)\}_{i=1}^N$ is a real (or complex) orthonormal system in $L_2(\Omega,\mu)$ satisfying Condition E($t$) for some constant $t>0$. 
	Then  there exists a set $\{\xi^j\}_{j=1}^m\subset \Omega$ of $m \le C_1 t^2 N$ points such that for any $f=\sum_{i=1}^N c_iu_i$  we have  
	\begin{equation*}
	C_2 \|f\|_2^2 \le \frac{1}{m}\sum_{j=1}^m |f(\xi^j)|^2 \le C_3 t^2\|f\|_2^2, 
	\end{equation*}
	where $C_1, C_2$ and $C_3$ are absolute positive constants. 
\end{Theorem}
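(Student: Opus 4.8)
\noindent\emph{Proof proposal.} The statement is an equal‑weight refinement of Theorem \ref{thm-4-3}, so the plan is to recast it as a spectral estimate for a sampling Gram matrix and then to produce the point set in two stages: a random equal‑weight discretization on $M\asymp t^2N\log N$ points, followed by a deterministic sparsification down to $m\le C_1t^2N$ points. Put $X_N:=\sp\{u_i\}_{i=1}^N$ and, for $\bx\in\Og$, let $\mathbf u(\bx):=(u_1(\bx),\dots,u_N(\bx))$. For $f=\sum_ic_iu_i\in X_N$, Parseval gives $\|f\|_2=\|c\|_2$, and for any $\xi=\{\xi^j\}_{j=1}^m\subset\Og$ one checks that $\frac1m\sum_j|f(\xi^j)|^2=\langle G_\xi c,c\rangle$ with $G_\xi:=\frac1m\sum_j\mathbf u(\xi^j)\mathbf u(\xi^j)^{*}$, so the conclusion is equivalent to $C_2I_N\preceq G_\xi\preceq C_3t^2I_N$. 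Here $\int_\Og\mathbf u(\bx)\mathbf u(\bx)^{*}\,d\mu=I_N$ by orthonormality, so $\mathbb{E}\,G_\xi=I_N$ for i.i.d.\ $\mu$‑points; and Condition E($t$) is exactly the statement $\|\mathbf u(\bx)\|_2^2\le Nt^2$ for all $\bx$, i.e.\ each rank‑one summand $\mathbf u(\xi^j)\mathbf u(\xi^j)^{*}$ has operator norm $\le Nt^2$, whence also the Nikol'skii‑type bound $\|f\|_\infty\le\sqrt N\,t\,\|f\|_2$ on $X_N$.

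For Stage 1, take $M\asymp t^2N\log N$ i.i.d.\ $\mu$‑distributed points $\eta^1,\dots,\eta^M$ and apply a matrix Chernoff (or matrix Bernstein) inequality; using the operator‑norm bound $Nt^2$ on the summands, with positive probability one gets $\tfrac12 I_N\preceq\tfrac1M\sum_j\mathbf u(\eta^j)\mathbf u(\eta^j)^{*}\preceq 2I_N$, that is, a two‑sided equal‑weight discretization with absolute constants, whose leverage scores are then $O(1/\log N)$. (Alternatively one could start from the weighted discretization furnished by Theorem \ref{thm-4-3}.)

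For Stage 2, apply a Batson--Spielman--Srivastava‑type sparsification --- in its sub‑multiset (``unweighted'') form, which is available here because the leverage scores are small --- to extract from $\{\eta^j\}$ a sub‑multiset $\xi=\{\xi^j\}_{j=1}^m$ of cardinality $m\le C_1t^2N$ on which the lower inequality $\lambda_{\min}(G_\xi)\ge C_2$ is retained. The slack in the corresponding upper spectral bound, together with the Nikol'skii bound $\|f\|_\infty^2\le Nt^2\|f\|_2^2$ used to absorb the rounding of the sparsifier's weights to the common value $1/m$, produces the factor $t^2$ in the upper constant $C_3t^2$; this is why the conclusion is sharp from below but loses a factor $t^2$ from above.

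The step I expect to be the main obstacle is Stage 2: pure probabilistic subsampling cannot push the number of points below $\asymp t^2N\log N$, since the $\log N$ enters through the dimension‑$N$ union bound in the lower‑tail matrix Chernoff estimate, so one genuinely needs a deterministic sparsification; and ensuring that the extracted subset carries \emph{equal} (not merely bounded) weights is the delicate point, and it is precisely there that the smallness of the leverage scores and Condition E($t$) enter.
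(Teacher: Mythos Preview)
The paper does not contain a proof of this theorem; it is quoted verbatim from \cite{LT} (Limonova--Temlyakov), so there is no in-paper argument to compare against directly.

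As for the soundness of your proposal: the reformulation via the sampling Gram matrix $G_\xi$ and the reading of Condition E($t$) as a uniform bound $\|\mathbf u(\bx)\|_2^2\le Nt^2$ are correct, and the two-stage architecture (random equal-weight oversampling via matrix Chernoff on $M\asymp t^2N\log N$ points, then deterministic thinning to $m\asymp t^2N$) is indeed the shape of the argument in \cite{LT}. The soft spot is Stage~2. Standard Batson--Spielman--Srivastava outputs a \emph{weighted} sparsifier; there is no off-the-shelf ``sub-multiset/unweighted BSS'' that one may simply invoke once leverage scores are small, and the rounding maneuver you sketch (absorb the BSS weights into the constant via the Nikol'skii bound) does not by itself yield a two-sided inequality with absolute lower constant. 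In \cite{LT} this step is carried out with the Marcus--Spielman--Srivastava theorem (the Weaver/Kadison--Singer bound), which, unlike BSS, produces an \emph{unweighted} partition of the index set with two-sided spectral control; iterating it halves the number of points while preserving equal weights, and the iteration terminates at $m\asymp t^2N$. You correctly flag Stage~2 as the crux, but the tool you name is not the one that actually resolves it.
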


Recall that  $\bw_m=(\frac1m,\cdots, \frac1m) \in\mathbb{R}^m$. 
 For a fixed positive integer $m$ and  a class $\bF\subset \C(\Og)$ (usually, a centrally symmetric compact in $\C(\Og)$), define
$$
\varrho_m^{ls}(\bF,L_2) := \inf_{\xi,\,X_N, N\leq m} \sup_{f\in \bF} \|f-\ell 2 \bw_m(\xi,X_N)(f)\|_2,
$$
where     the infimum is   taken over all  $N$-dimensional subspaces $X_N\subset L_2(\Og, \mu)$ with $N\leq m$, and  all collections $\xi:=\{\xi^j\}_{j=1}^m\subset \Omega$ of $m$ 
points in $\Og$.
We now define $E(t)$-conditioned Kolmogorov widths by 
$$
d_N^{E(t)}(\bF,L_p) :=     \inf_{\{u_1,\dots,u_N\}\, \text{satisfies Condition} E(t)}   \sup_{f\in \bF}\inf_{c_1,\dots,c_N}\|f-\sum_{i=1}^N c_iu_i\|_p.
$$

Now combining Theorem \ref{CT2}  with Theorem \ref{srT2}, we obtain the following result, which was proved in  \cite{VT183}. 
\begin{Theorem}\label{CT3} Let $\bF$ be a compact subset of $\C(\Omega)$. There exist two positive   constants $c$ and $C$ which may depend on $t$ such that
$$
\ro_{cn}^{ls}(\bF,L_2) \le Cd_n^{E(t)}(\bF,L_\infty).
$$
\end{Theorem}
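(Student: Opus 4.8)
The plan is to combine the given weighted least squares recovery bound (Theorem \ref{srT2}) with the equal‑weight $L_2$ discretization theorem (Theorem \ref{CT2}), exactly in the spirit of the derivation of Theorem \ref{IT2} from Theorems \ref{thm-4-3} and \ref{srT2}. First I would fix a compact $\bF\subset\C(\Og)$ and an integer $n$, and pick an $N$‑dimensional subspace $U=\sp\{u_1,\dots,u_N\}$ with $N\le n$ whose generators satisfy Condition E($t$) and which nearly attains the $E(t)$‑conditioned Kolmogorov width, i.e.
$$
\sup_{f\in\bF} d(f,U)_\infty \le 2\, d_n^{E(t)}(\bF,L_\infty),
$$
where $d(f,U)_\infty=\inf_{u\in U}\|f-u\|_\infty$. (If the infimum in the width is attained we can drop the factor $2$.)

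Next I would apply Theorem \ref{CT2} to the orthonormal system $\{u_i\}_{i=1}^N$: this produces $m\le C_1 t^2 N\le C_1 t^2 n$ points $\xi=\{\xi^j\}_{j=1}^m\subset\Og$ and absolute constants $C_2,C_3$ with
$$
C_2\|f\|_2^2 \le \frac1m\sum_{j=1}^m |f(\xi^j)|^2 \le C_3 t^2\|f\|_2^2,\qquad \forall f\in U .
$$
Set $c:=1/(C_1 t^2)$, so $m=cn$ is achievable (after the usual rounding; one checks $\ro^{ls}_m$ is monotone in $m$ by discarding or duplicating points, so it suffices to have at least $cn$ points). The left inequality is precisely assumption \textbf{A1} for the subspace $X_N:=U$, $p=2$, with weight vector $\bw=\bw_m=(1/m,\dots,1/m)$ and constant $C_1^{\text{(A1)}}=\sqrt{C_2}$; and $\sum_{j=1}^m w_j=1$, so assumption \textbf{A2} holds with $C_2^{\text{(A2)}}=1$. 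Hence Theorem \ref{srT2} (the $1\le p<\infty$ case with $p=2$) gives, for every $f\in\C(\Og)$,
$$
\|f-\ell 2\bw_m(\xi,U)(f)\|_2 \le (2C_2^{-1/2}+1)\, d(f,U)_\infty .
$$

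Finally I would take the supremum over $f\in\bF$ and combine with the width estimate: since $\ell 2\bw_m(\xi,U)$ is an admissible competitor in the infimum defining $\ro^{ls}_{m}(\bF,L_2)$ (it uses $m$ sample points and an $N$‑dimensional subspace with $N\le m$), we obtain
$$
\ro^{ls}_{cn}(\bF,L_2) \le (2C_2^{-1/2}+1)\sup_{f\in\bF} d(f,U)_\infty \le 2(2C_2^{-1/2}+1)\, d_n^{E(t)}(\bF,L_\infty),
$$
which is the claimed inequality with $C:=2(2C_2^{-1/2}+1)$ and $c:=1/(C_1t^2)$; both depend only on $t$ (through $c$) and are otherwise absolute. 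I do not anticipate a genuine obstacle here — the statement is essentially a bookkeeping assembly of three quoted results. The only points requiring a little care are: checking that $\ell 2\bw_m$ with equal weights indeed falls under the $p=2$ branch of Theorem \ref{srT2} with the correct constants (it does, since $\bw_m$ is a positive weight with $\sum w_j=1$); handling the rounding so that exactly $cn$ points, rather than some $m\le C_1t^2N$, can be used in the definition of $\ro^{ls}_{cn}$; and confirming that Condition E($t$) is preserved when we restrict attention to the near‑optimal subspace in the width — which holds by the very definition of $d_n^{E(t)}$.
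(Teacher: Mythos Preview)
Your proposal is correct and follows exactly the route the paper indicates (``combining Theorem~\ref{CT2} with Theorem~\ref{srT2}''), so there is nothing substantive to add. One small slip: since Theorem~\ref{CT2} yields $m\le C_1 t^2 N\le C_1 t^2 n$ points and $\ro^{ls}_{m'}$ is nonincreasing in $m'$, the constant should be $c=C_1 t^2$ (so that $cn\ge m$), not $c=1/(C_1 t^2)$.
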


We now present some results on the sampling recovery in $L_p$, $2<p<\infty$. 
For a fixed positive integer  $m$,  and  a class $\bF\subset \C(\Og)$ (usually, a centrally symmetric compact in $\C(\Og)$), we define 
$$
\varrho_m^{lp}(\bF,L_p) := \inf_{\xi,\,X_N, N\leq m} \sup_{f\in \bF} \|f-\ell p \bw_m(\xi,X_N)(f)\|_p,
$$
 where   the infimum is   taken over all  $N$-dimensional subspaces $X_N\subset L_p(\Og, \mu)$ with $N\leq m$, and  all collections $\xi:=\{\xi^j\}_{j=1}^m\subset \Omega$ of $m$ 
points in $\Og$.

{\bf Condition NpB.} We say that an $N$-dimensional subspace $X_N\subset L_p(\Omega,\mu)$ satisfies Condition NpB (Nikol'skii-type inequality for the pair $(p,\infty)$) if for all $\bx\in \Omega$ we have for all $f\in X_N$
$$
|f(\bx)| \le BN^{1/p}\|f\|_{p},
$$
where $B\ge 1$ is a constant. 
It is well known that Condition E($t$) is equivalent to Condition N2$t$ (see, for instance, \cite{LT} for an explanation and \cite{DP} for a detailed discussion).

We now define the NpB-conditioned Kolmogorov width by 
$$
d_N^{NpB}(\bF,L_p) :=     \inf_{X_N\, \text{satisfies Condition NpB} }  \sup_{f\in \bF}\inf_{g\in X_N}\|f-g\|_p.
$$

Theorem \ref{AT2} combined with Theorem \ref{srT2} gives the following analog of Theorem \ref{CT3}.

\begin{Theorem}\label{srT4} Let $\bF$ be a compact subset of $\C(\Omega)$.	
	For $p\in (2,\infty)$ and every constant $B\ge 1$,  there exist two positive   constants $c$ and $C$, which may depend on $p$, such that
$$
\ro_{m}^{lp}(\bF,L_p) \le Cd_n^{NpB}(\bF,L_\infty)
$$
provided $m\ge cB^pn(\log (2B n))^3$.
\end{Theorem}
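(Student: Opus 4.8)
The plan is to combine the discretization of Theorem~\ref{AT2} (used with $q=p$) with the recovery bound of Theorem~\ref{srT2} (used with equal weights), in the same manner that Theorem~\ref{CT3} was deduced from Theorems~\ref{CT2} and~\ref{srT2}. First I would fix $\eta>0$ and choose an $n$-dimensional subspace $X_n\subset\C(\Og)$ that obeys Condition~NpB with the prescribed constant $B$ and satisfies $\sup_{f\in\bF}d(f,X_n)_\infty\le d_n^{NpB}(\bF,L_\infty)+\eta$; such an $X_n$ exists by the definition of $d_n^{NpB}$. The point is that Condition~NpB for $X_n$ is precisely the Nikol'skii inequality~\eqref{A3} with $q=p$ and $N=n$, so Theorem~\ref{AT2} applies (take, say, $\ep=\frac12$) and produces points $\xi^1,\dots,\xi^{m_0}\in\Og$ with $m_0\le c_0 B^p n(\log_2(2Bn))^3$, $c_0=c_0(p)$, such that
$$
\frac12\|u\|_p^p\le\frac1{m_0}\sum_{j=1}^{m_0}|u(\xi^j)|^p\le\frac32\|u\|_p^p,\qquad\forall\, u\in X_n .
$$

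Next, for a prescribed $m\ge cB^p n(\log_2(2Bn))^3$ with $c:=c_0$ (so that $m\ge m_0$), I would pad this point set to exactly $m$ points: put $r=\lfloor m/m_0\rfloor\ge1$ and take $r$ copies of each $\xi^j$ together with $m-rm_0<m_0$ extra copies of $\xi^1$, equipped with the uniform weights $\bw_m=(\frac1m,\dots,\frac1m)$. Because $rm_0/m>\frac12$, the lower discretization inequality is preserved: $\frac1m\sum_{j=1}^m|u(\xi^j)|^p\ge\frac14\|u\|_p^p$ for all $u\in X_n$. Hence Assumption~\textbf{A1} of Theorem~\ref{srT2} holds for the exponent $p$ and the subspace $X_n$ with $C_1=4^{-1/p}$, and Assumption~\textbf{A2} holds with $C_2=\sum_j w_j=1$. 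Theorem~\ref{srT2} then gives, for every $f\in\C(\Og)$,
$$
\|f-\ell p\bw_m(\xi,X_n)(f)\|_p\le\bigl(2\cdot4^{1/p}+1\bigr)\,d(f,X_n)_\infty .
$$

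Finally, taking the supremum over $f\in\bF$ and recalling the choice of $X_n$, the admissible competitor $(\xi,X_n)$ in the infimum defining $\varrho_m^{lp}$ yields $\varrho_m^{lp}(\bF,L_p)\le(2\cdot4^{1/p}+1)(d_n^{NpB}(\bF,L_\infty)+\eta)$, and letting $\eta\to0$ completes the proof with $C=2\cdot4^{1/p}+1$ and $c=c_0$. I do not anticipate a serious obstacle: the only points needing a little care are reconciling the prescribed number $m$ of sample points with the number $m_0$ produced by Theorem~\ref{AT2} (handled by the padding above, at the cost of a harmless factor in $C_1$), and observing that strict convexity of the $\ell_p$ norm for $p>2$ together with the lower bound in~\textbf{A1} makes $\ell p\bw_m(\xi,X_n)$ a well-defined recovery map that uses only the sampled values $f(\xi^j)$.
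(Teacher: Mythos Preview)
Your proposal is correct and follows essentially the same approach as the paper, which simply states that Theorem~\ref{AT2} combined with Theorem~\ref{srT2} gives Theorem~\ref{srT4}. You have supplied the routine details (the $\eta$-approximation to handle the infimum defining $d_n^{NpB}$, and the padding argument to reconcile the prescribed number $m$ of sample points with the $m_0$ produced by Theorem~\ref{AT2}) that the paper leaves implicit.
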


Corollary \ref{AC2} combined with Theorem \ref{srT2} gives the following generalization of Theorem \ref{srT4}.

\begin{Theorem}\label{srT4g} Let $\bF$ be a compact subset of $\C(\Omega)$, and let  $2\le q\le p< \infty$. Let $B\ge 1$ be a given constant.  Then  there exist two positive   constants $c$, $C$, which may depend on $q,p$, such that
$$
\ro_{m}^{lp}(\bF,L_p) \le Cd_n^{NqB}(\bF,L_\infty)
$$
provided $m\ge cB^pn^{p/q}(\log (2B n))^3$.
\end{Theorem}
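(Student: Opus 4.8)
The plan is to imitate the derivation of Theorem~\ref{srT4}, using Corollary~\ref{AC2} in place of Theorem~\ref{AT2}. First I fix $\ep=1/2$ and take $c=c(p,q)$ to be at least the constant $C(p,q,1/2)$ produced by Corollary~\ref{AC2}. Given $\bF$, the index $n$, and a small $\eta>0$ (to be sent to $0$ at the end), I choose an $n$-dimensional subspace $X_n\subset L_p(\Og,\mu)$ satisfying Condition~NqB with the prescribed constant $B$ and with $\sup_{f\in\bF}\inf_{g\in X_n}\|f-g\|_\infty\le d_n^{NqB}(\bF,L_\infty)+\eta$. The key point is that Condition~NqB for $X_n$ is exactly hypothesis~\eqref{A3} of Corollary~\ref{AC2} with $N=n$; since $2\le q\le p<\infty$, that corollary (applied with $\ep=1/2$) then furnishes a set $\{\eta^j\}_{j=1}^{m_0}\subset\Og$ with $m_0\le cB^pn^{p/q}(\log_2(2Bn))^3$ such that $\tfrac12\|f\|_p^p\le\frac1{m_0}\sum_{j=1}^{m_0}|f(\eta^j)|^p\le\tfrac32\|f\|_p^p$ for every $f\in X_n$.

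The next step is to feed this configuration into Theorem~\ref{srT2} with the uniform weights $\bw_{m_0}=(1/m_0,\dots,1/m_0)$. Assumption~\textbf{A1} holds with $C_1=(1/2)^{1/p}$ (the left discretization inequality just obtained), and Assumption~\textbf{A2} holds with $C_2=1$ since the weights sum to $1$. Theorem~\ref{srT2} then gives, for every $f\in\C(\Og)$,
\[
\|f-\ell p\bw_{m_0}(\{\eta^j\},X_n)(f)\|_p\le\bigl(2\cdot 2^{1/p}+1\bigr)\,d(f,X_n)_\infty .
\]
Taking $\sup_{f\in\bF}$, using the choice of $X_n$, and noting that $m_0\ge n$ (forced by the lower discretization bound), so that $\ell p\bw_{m_0}(\{\eta^j\},X_n)$ is admissible in the definition of $\varrho_{m_0}^{lp}$, and then letting $\eta\to0$, I obtain $\varrho_{m_0}^{lp}(\bF,L_p)\le C\,d_n^{NqB}(\bF,L_\infty)$ with $C=2\cdot 2^{1/p}+1$.

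Finally I remove the restriction $m=m_0$. Given any $m\ge cB^pn^{p/q}(\log_2(2Bn))^3$ (hence $m\ge m_0$), write $m=tm_0+r$ with $t\ge1$, $0\le r<m_0$, and form the $m$-point family obtained by repeating each $\eta^j$ at least $t$ times. With the uniform weights $1/m$ this family still discretizes from below, since the corresponding sum is $\ge\frac tm\sum_{j=1}^{m_0}|f(\eta^j)|^p\ge\frac{tm_0}{m}\cdot\tfrac12\|f\|_p^p\ge\tfrac14\|f\|_p^p$ (using $\frac{tm_0}{m}\ge\frac{t}{t+1}\ge\tfrac12$), and the weights again sum to $1$. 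Applying Theorem~\ref{srT2} once more with $C_1=(1/4)^{1/p}$ and $C_2=1$, and arguing as above, yields $\varrho_m^{lp}(\bF,L_p)\le C\,d_n^{NqB}(\bF,L_\infty)$ with $C=2\cdot 4^{1/p}+1$, a constant depending only on $p$; this completes the proof.

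I do not expect a genuine obstacle here: the substantive work is already in Corollary~\ref{AC2} and Theorem~\ref{srT2}, and the argument merely assembles them. The two points needing a little care are the identification of Condition~NqB with \eqref{A3} (immediate from the definitions) and the bookkeeping that upgrades the set of at most $m_0$ points produced by Corollary~\ref{AC2} into a statement valid for every $m$ above the threshold — handled by the repetition device of the last paragraph.
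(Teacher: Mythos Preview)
Your proof is correct and follows exactly the route the paper indicates: combine Corollary~\ref{AC2} (which supplies the equal-weight discretization under Condition~NqB) with Theorem~\ref{srT2}. The paper gives no further detail than that one sentence, so your additional bookkeeping---the near-optimal choice of $X_n$ with a slack $\eta\to0$, the observation that $m_0\ge n$ from the lower discretization inequality, and the point-repetition device to pass from $m_0$ to an arbitrary $m$ above the threshold---simply makes explicit what the paper leaves implicit.
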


{\bf Condition EqB.} We say that an $N$-dimensional subspace $X_N\subset L_p(\Omega,\mu)$ satisfies Condition EqB (Entropy condition with parameters $q$ and $B$) if it satisfies inequalities (\ref{A1}).

Using Corollary \ref{AC1} and Theorem \ref{srT2} we obtain the following version of Theorem \ref{srT4g}.

\begin{Theorem}\label{srT5} Let $\bF$ be a compact subset of $\C(\Omega)$ and let $1\le q\le p<\infty$. There exist two positive   constants $c$ and $C$ which may depend on $p$ and $q$ such that
$$
\ro_{m}^{lp}(\bF,L_p) \le Cd_n^{EqB}(\bF,L_\infty)
$$
provided $m\ge cB^pn^{p/q}(\log(2Bn))^2$.
\end{Theorem}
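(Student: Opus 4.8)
The plan is to combine the discretization bound of Corollary \ref{AC1} with the recovery estimate of Theorem \ref{srT2}, in the same way that Theorems \ref{srT4} and \ref{srT4g} are deduced from Corollary \ref{AC2}. We may assume $d_n^{EqB}(\bF,L_\infty)<\infty$, since otherwise there is nothing to prove. Fix $\eta>0$; I would first choose an $n$-dimensional subspace $X_n\subset\C(\Omega)$ satisfying Condition EqB, i.e.\ the entropy bound \eqref{A1} with parameters $q$ and $B$, such that $\sup_{f\in\bF} d(f,X_n)_\infty\le d_n^{EqB}(\bF,L_\infty)+\eta$. Applying Corollary \ref{AC1} to $X_n$ (which is legitimate because $1\le q\le p<\infty$ and \eqref{A1} holds) produces a set $\{\zeta^j\}_{j=1}^{m_0}\subset\Omega$ with $m_0\le C(p,q)B^{p}n^{p/q}(\log_2(2Bn))^2$ and
$$
\tfrac12\|u\|_p^p\le\tfrac1{m_0}\sum_{j=1}^{m_0}|u(\zeta^j)|^p\le\tfrac32\|u\|_p^p,\qquad u\in X_n .
$$

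Next I would fix the constant $c=c(p,q)$ in the statement large enough that the hypothesis $m\ge cB^{p}n^{p/q}(\log(2Bn))^2$ forces both $m\ge m_0$ (here one absorbs the change of logarithmic base) and $m\ge n$ (automatic once $c\ge1$, since $B\ge1$, $p/q\ge1$, $\log_2(2Bn)\ge1$). For such an $m$, write $m=km_0+r$ with $k\ge1$, $0\le r<m_0$, and take $\xi=\{\xi^j\}_{j=1}^m$ to consist of each $\zeta^j$ repeated $k$ times together with $r$ further copies chosen from $\zeta^1,\dots,\zeta^r$. Then for every $u\in X_n$,
$$
\frac1m\sum_{j=1}^m|u(\xi^j)|^p\ \ge\ \frac{km_0}{m}\cdot\frac1{m_0}\sum_{j=1}^{m_0}|u(\zeta^j)|^p\ \ge\ \frac{k}{k+1}\cdot\frac12\|u\|_p^p\ \ge\ \frac14\|u\|_p^p .
$$
With the weight $\bw=\bw_m=(1/m,\dots,1/m)$ this is exactly Assumption A1 of Theorem \ref{srT2} for the pair $(\xi,X_n)$, with $C_1=4^{-1/p}$, while Assumption A2 holds trivially with $C_2=\sum_{\nu=1}^m 1/m=1$; note that in the equal-weight setting the weight condition is automatic, so no side assumption such as $1\in X_n$ is needed.

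Then the $1\le p<\infty$ case of Theorem \ref{srT2} gives, for every $f\in\C(\Omega)$,
$$
\|f-\ell p\bw_m(\xi,X_n)(f)\|_p\le\big(2\cdot 4^{1/p}+1\big)\,d(f,X_n)_\infty .
$$
Taking the supremum over $f\in\bF$ and recalling the choice of $X_n$, the right-hand side is at most $(2\cdot4^{1/p}+1)(d_n^{EqB}(\bF,L_\infty)+\eta)$. Since $n\le m$, the pair $(\xi,X_n)$ together with $\bw_m$ is admissible in the infimum defining $\ro_m^{lp}(\bF,L_p)$, so $\ro_m^{lp}(\bF,L_p)\le(2\cdot4^{1/p}+1)(d_n^{EqB}(\bF,L_\infty)+\eta)$; letting $\eta\to0$ and setting $C:=2\cdot4^{1/p}+1$ finishes the argument.

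There is no deep obstacle here: the statement is a direct corollary of two results already quoted, and in fact it is the most elementary of the sampling-recovery theorems \ref{srT4}--\ref{srT5}, because Corollary \ref{AC1} places no restriction on the relation between $q$ and $2$. The only point that will need a little care is bridging the ``there exists a set of $m_0$ points'' form of Corollary \ref{AC1} with the ``for every $m$ above a threshold'' form of the theorem; the repetition construction in the second paragraph does this, and one should just check that the constants it produces are independent of $m$, which holds because $k/(k+1)\ge1/2$ for all $k\ge1$.
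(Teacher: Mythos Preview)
Your argument is correct and follows precisely the route the paper indicates: the paper merely states that Theorem \ref{srT5} is obtained by combining Corollary \ref{AC1} with Theorem \ref{srT2}, giving no further details. Your only addition is the point-repetition trick to pass from the ``there exist $m_0$ points'' conclusion of Corollary \ref{AC1} to the ``for every $m$ above a threshold'' formulation of the theorem, which the paper leaves implicit; this is a harmless bookkeeping step and your constants are handled correctly.
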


\Addresses

\end{document}